\documentclass[11pt]{amsart}
\usepackage{amsmath, amsthm, amssymb}
\usepackage{graphicx}
\usepackage[usenames]{color}
\usepackage{srcltx} 
\usepackage{verbatim}
\usepackage{tikz-cd}
\usepackage{mathtools}

\newtheorem{thm}{Theorem}[section]
\newcommand{\bt}{\begin{thm}}
\newcommand{\et}{\end{thm}}

\newtheorem{ex}[thm]{Example}

\newtheorem{cor}[thm]{Corollary}   
\newcommand{\bc}{\begin{cor}}
\newcommand{\ec}{\end{cor}}

\newtheorem{lem}[thm]{Lemma}   
\newcommand{\bl}{\begin{lem}}
\newcommand{\el}{\end{lem}}

\newtheorem{prop}[thm]{Proposition}
\newcommand{\bp}{\begin{prop}}
\newcommand{\ep}{\end{prop}}

\newtheorem{defn}[thm]{Definition}
\newcommand{\bd}{\begin{defn}}    
\newcommand{\ed}{\end{defn}}

\newtheorem{rmrk}[thm]{Remark}   

\newcommand{\br}{\begin{rmrk}}
\newcommand{\er}{\end{rmrk}}

\newcommand{\mGHto}{\stackrel { \textrm{mGH}}{\longrightarrow} }
\newcommand{\GHto}{\stackrel { \textrm{GH}}{\longrightarrow} }
\newcommand{\Fto}{\stackrel {\mathcal{F}}{\longrightarrow} }
\newcommand{\VFto}{\stackrel {\mathcal{VF}}{\longrightarrow} }

\newcommand{\be}{\begin{equation}}

\newcommand{\ee}{\end{equation}}

\newcommand{\N}{\mathbb{N}}

\newcommand{\R}{\mathbb{R}}

\DeclareMathOperator{\inj}{inj}
\DeclareMathOperator{\expon}{exp}

\newcommand{\diam}{\operatorname{Diam}}






\newcommand{\vol}{\operatorname{Vol}}

\newcommand{\lp}{\left (}
\newcommand{\rp}{\right )}

\newcommand{\Sp}{\mathbb{S}}      
\newcommand{\Tor}{\mathbb{T}}     

\def\Xint#1{\mathchoice
{\XXint\displaystyle\textstyle{#1}}%
{\XXint\textstyle\scriptstyle{#1}}%
{\XXint\scriptstyle\scriptscriptstyle{#1}}%
{\XXint\scriptscriptstyle\scriptscriptstyle{#1}}%
\!\int}
\def\XXint#1#2#3{{\setbox0=\hbox{$#1{#2#3}{\int}$ }
\vcenter{\hbox{$#2#3$ }}\kern-.6\wd0}}

\def\dashint{\Xint-}


\begin{document}

\title[Gromov-Hausdorff Convergence and $L^p$ Bounds]{From $L^p$ Bounds to Gromov-Hausdorff Convergence of Riemannian Manifolds}

\author{Brian Allen}
\address{University of Hartford}
\email{brianallenmath@gmail.com}





\begin{abstract}
In this paper we provide a way of taking $L^p$, $p > \frac{m}{2}$ bounds on a $m-$ dimensional Riemannian metric and transforming that into H\"{o}lder bounds for the corresponding distance function. One can think of this new estimate as a type of Morrey inequality for Riemannian manifolds where one thinks of a Riemannian metric as the gradient of the corresponding distance function so that the $L^p$, $p > \frac{m}{2}$ bound analogously implies H\"{o}lder control on the distance function. This new estimate is then used to state a compactness theorem, another theorem which guarantees convergence to a particular Riemmanian manifold, and a new scalar torus stability result. We expect these results to be useful for proving geometric stability results in the presence of scalar curvature bounds when Gromov-Hausdorff convergence is expected. 
\end{abstract}

\maketitle

\section{Introduction}

Gromov's compactness theorem \cite{Gromov-poly, Gromov-metric} provides geometric conditions on a sequence of metric spaces which guarantees a subsequence converges in the Gromov-Hausdorff sense. The advantage of this compactness theorem is that the hypotheses are natural from a metric geometry point of view. One just needs a bound on diameter and for every $\varepsilon > 0$ a bound on the number of $\varepsilon$-balls which cover the metric space. In this paper we are interested in compactness and convergence theorems which assume natural geometric analysis assumptions on a sequence of Riemannian manifolds. In particular, we are interested in understanding for which $p>1$ does an $L^p$ bound on a Riemannian manifold imply Gromov-Hausdorff convergence.

 When attempting to prove geometric stability results for Riemannian manifolds it can be useful to obtain weaker estimates first, such as $L^p$ estimates for the Riemannian metric, and then use these estimates to bootstrap up to a stronger notion of convergence. In this paper we provide a way of taking $L^p$, $p > \frac{m}{2}$ bounds on an $m-$dimensional Riemannian metric and transforming that into H\"{o}lder bounds for the corresponding distance function. One can think of this new estimate as a type of Morrey inequality for Riemannian manifolds where one thinks of a Riemannian metric as the gradient of the corresponding distance function so that the $L^p$, $p > \frac{m}{2}$ bound analogously implies H\"{o}lder control on the distance function. We begin by stating the H\"{o}lder estimate on the distance function which is used to prove the subsequent results and which is interesting in its own right.

\begin{thm}\label{MainThmEst}
Let $M^m$ be a smooth, closed,  connected manifold, $M_0=(M,g_0)$ a smooth Riemannian manifold, and $M_1=(M,g_1)$ a continuous Riemannian manifold. If
 \begin{align}
 \exists p > m, \quad \|g_1\|_{L_{g_0}^{\frac{p}{2}}(M)} \le C
 \end{align}
  then 
\begin{align}
d_1(q_1,q_2) \le C'(K,m,p,I_0,D_0,C) d_0(q_1,q_2)^{\frac{p-m}{p}},\quad \forall q_1,q_2 \in M,
\end{align}
where $K$ is a bound on the absolute value of the sectional curvature of $M_0$, $I_0=\inj(M,g_0)$ is the injectivity radius of $M_0$, and $D_0=\diam(M_0)$ is the diameter of $M_0$.
\end{thm}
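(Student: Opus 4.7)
The plan is to prove a Morrey-type inequality on $(M, g_0)$, treating the scalar function $f(q) := \sqrt{\sup_{|v|_{g_0}=1} g_1(v,v)}$ as the analogue of the gradient of $d_1(q_1, \cdot)$. Since any reasonable tensor norm on $g_1$ is equivalent to the operator norm up to an $m$-dependent constant, the hypothesis on $\|g_1\|_{L^{p/2}_{g_0}(M)}$ translates to $\|f\|_{L^p_{g_0}(M)} \le C'$; and for any piecewise-$C^1$ curve $\gamma\colon [0,L] \to M$ parameterized by $g_0$-arc length,
\begin{equation}
L_{g_1}(\gamma) = \int_0^L \sqrt{g_1(\dot\gamma, \dot\gamma)}\, ds \le \int_0^L f(\gamma(s))\, ds,
\end{equation}
so bounds on weighted integrals of $f$ along $g_0$-geodesics translate directly into upper bounds on $d_1$.

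The core step is the short-distance case $r := d_0(q_1, q_2) < I_0/2$. Work in $g_0$-geodesic polar coordinates at $q_1$; the bound $|\sect_{g_0}| \le K$ together with Rauch comparison yields a two-sided Jacobian estimate $c_1(K, I_0)\,\rho^{m-1} \le J(\rho, \omega) \le c_2(K, I_0)\,\rho^{m-1}$ for $\rho \le I_0$. For each $q \in B_0(q_1, r)$, the unique minimizing radial $g_0$-geodesic $\gamma_{q_1, q}$ gives $d_1(q_1, q) \le \int_0^{d_0(q_1, q)} f \circ \gamma_{q_1, q}\, ds$. Averaging over $q \in B_0(q_1, r)$, applying Fubini in polar coordinates, and using the Jacobian bounds produces
\begin{equation}
\frac{1}{\vol_{g_0}(B_0(q_1, r))} \int_{B_0(q_1, r)} d_1(q_1, q)\, d\vol_{g_0}(q) \le C \int_{B_0(q_1, r)} \frac{f(q)}{d_0(q_1, q)^{m-1}}\, d\vol_{g_0}(q).
\end{equation}
Hölder's inequality with conjugate exponents $p$ and $p/(p-1)$, together with the elementary computation
\begin{equation}
\int_{B_0(q_1, r)} d_0(q_1, q)^{-(m-1)p/(p-1)}\, d\vol_{g_0}(q) \le C\, r^{(p-m)/(p-1)},
\end{equation}
which converges precisely when $p > m$, then upgrades this to
\begin{equation}
\frac{1}{\vol_{g_0}(B_0(q_1, r))} \int_{B_0(q_1, r)} d_1(q_1, q)\, d\vol_{g_0}(q) \le C\, r^{(p-m)/p}\, \|f\|_{L^p_{g_0}(M)},
\end{equation}
and the same estimate holds about $q_2$. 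Since $B_0(q_1, r) \cap B_0(q_2, r)$ contains the $g_0$-ball of radius $r/4$ about the midpoint of the minimizing $g_0$-geodesic from $q_1$ to $q_2$, its volume is at least $c(K, m)\, r^m$; averaging the triangle inequality $d_1(q_1, q_2) \le d_1(q_1, q) + d_1(q, q_2)$ over $q$ in this intersection closes the short-distance case.

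For $r \ge I_0/2$, chain roughly $\lceil 2 D_0 / I_0 \rceil$ short segments along a minimizing $g_0$-geodesic from $q_1$ to $q_2$ to obtain a uniform bound on $\diam_{g_1}(M)$ in terms of $K, m, p, I_0, D_0$, and $C$. Since $r^{(p-m)/p}$ is bounded below by $(I_0/2)^{(p-m)/p}$ in this range, the Hölder estimate extends to all pairs in $M$ with an adjusted constant.

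The main obstacle is strictly technical: one must verify that every constant (the Jacobian bound for $\exp_{q_1}$ via Rauch comparison, the lower bound on $\vol_{g_0}(B_0(q_1, r) \cap B_0(q_2, r))$, and the covering used in the chaining argument) depends only on the allowed parameters $K, I_0, D_0, m, p$, and $C$. The Morrey-type computation itself is classical, and the threshold $p > m$ enters precisely where $d_0(q_1, \cdot)^{-(m-1)}$ must lie locally in $L^{p/(p-1)}$ against $d\vol_{g_0}$, which is the $m$-dimensional Morrey borderline.
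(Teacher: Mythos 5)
Your argument is correct and reaches the same Hölder exponent, but it is genuinely different from the paper's. The paper never averages the function $q \mapsto d_1(q_1, q)$ over a ball; instead, in Definition \ref{SymFamilyofCurvesDef} and Lemma \ref{MainToolsForProof}, it constructs an explicit one--parameter-thick family of curves $\gamma(\tau,t,\vec s)=(\tau L\sin(\pi t/L),t,\vec s)$ filling a tubular neighborhood of the $g_0$-minimizing geodesic from $q_1$ to $q_2$, every curve of which already joins $q_1$ to $q_2$. It then bounds $d_1(q_1,q_2)$ directly by the average $L_{g_1}$-length over that family, so no triangle-inequality/recentering step is needed; the analytic content shifts to estimating the normal Jacobian of the sine-parametrization (Lemma \ref{MainToolsForProof}) and to proving that the tubular-neighborhood radius depends only on $K$ and $I_0$ via a focal-point argument (Lemma \ref{NorNeighRadiusEst}). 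Your route is the classical Riesz-potential/Morrey proof transported to $M_0$: polar coordinates at $q_1$ and $q_2$, Rauch/Günther and Bishop two-sided Jacobian comparison, averaging $d_1(q_1,\cdot)$, a $d_0^{-(m-1)p/(p-1)}$ integrability check (which gives the identical threshold $p>m$ and exponent $(p-m)/p$), the midpoint-ball intersection to pass from averages back to a pointwise bound, and a chaining argument for $d_0(q_1,q_2)\gtrsim I_0$. What your approach buys is conceptual simplicity and reliance on standard exponential-map Jacobian comparison rather than the bespoke tubular-neighborhood Jacobian computation; what the paper's construction buys is avoiding the recentering and chaining steps entirely, since the averaged curves already have the correct endpoints. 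One small point worth tightening in your write-up: the lower Jacobian bound $J(\rho,\omega)\ge c_1(K,I_0)\rho^{m-1}$ from Günther's comparison is only available for $\rho$ below a threshold depending on $K$ as well as $I_0$ (when $K>0$ the comparison Jacobi field vanishes at $\pi/\sqrt K$), so the short-distance regime should be $r < r_0(K,I_0)$ rather than simply $r<I_0/2$; the chaining step then takes over for $r\ge r_0$, and the constants remain of the allowed form.
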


\begin{rmrk}
We note that the sectional curvature, injectivity radius, and diameter bounds are only needed on the background Riemannian manifold and not on the Riemannian manifold whose distance function is being estimated.  The proof uses a construction where a family of curves connecting two points $p,q \in M$, centered around a distance minimizing curve, is constructed which foliates a region of the background Riemannian manifold $M_0$. These geometric assumptions on $M_0$ are to control this foliated region.
\end{rmrk}

There is a rich history of using elliptic regularity to bootstrap up from $L^p$ bounds on Riemannian metrics in coordinates to $C^{k,\alpha}$ and $W^{k,p}$ bounds on Riemannian metrics including the work of Anderson \cite{Anderson-Ricci, Anderson-Orbifold}, Anderson and Cheeger \cite{Anderson-Cheeger}, Cheeger and Colding \cite{Cheeger-Colding-1}, Colding \cite{Colding-shape, Colding-volume}, Gao \cite{Gao-integral1}, Petersen and Wei \cite{Petersen-Wei-integral1, Petersen-Wei-integral2}, and Yang \cite{DYang-integral1, DYang-integral2, DYang-integral3}(See the survey by Petersen \cite{Petersen-Survey} for a broad overview). An important fact used in most of those works (in addition to many other important estimates) is a bound on Ricci curvature (integral or pointwise) and the crucial result that one can view Ricci curvature as a elliptic PDE for the corresponding Riemannian metric when taking advantage of harmonic coordinates. This is what allows one to bootstrap up to H\"{o}lder or Sobolev control on the Riemannian metric which we note is generally stronger than H\"{o}lder control on the distance function. When a bound on Ricci curvature is not appropriate one should not expect to obtain H\"{o}lder control on the Riemannian metric and what is interesting about the current results is that one can still obtain H\"{o}lder control on the distance function if an $L^{\frac{p}{2}}$, $p>m$ bound on the Riemannian metric is assumed.

In the work of Aldana, Carron, and Tapie \cite{Aldana-Carron-Tapie} a similar estimate is observed for conformal metrics (See Proposition 2.2). In the conformal case the H\"{o}lder compactness for the distance functions is a simple consequence of a relationship between the gradient of the distance function and the conformal factor. They are then able to use this result to show interesting compactness results in the presence of integral bounds on scalar curvature as well as results on $A_{\infty}$ weights. In the present work the importance of Theorem \ref{MainThmEst} is that it holds for general Riemannian metrics and because of this the result requires a completely different proof than in the conformal case. 

In the work of Bryden and the author \cite{Allen-Bryden} Sobolev bounds on a Riemannian metric and are able to obtain H\"{o}lder control on the corresponding distance function by taking advantage of new trace inequalities. These trace inequalities  allow the authors to transfer the Sobolev control on the Riemannian metric to control of integrals of the metric along curves which we then relate to distances. The advantage of the current results is that they only require $L^p$, $p > \frac{m}{2}$ bounds on the metric but should also be seen as complementary to the results in \cite{Allen-Bryden}.

We then use the new estimate in Theorem \ref{MainThmEst} to prove a compactness result and a convergence result which we state as follows. One advantage of this compactness theorem is that you also gain H\"{o}lder control on the distance function of the limiting metric space.

\begin{thm}\label{MainThmComp}
Let $M^m$ be a smooth, closed, connected manifold $M_0=(M,g_0)$ a smooth Riemannian manifold and $M_j=(M,g_j)$ a sequence of continuous Riemannian manifolds with corresponding distance functions $d_0, d_j$, respectively. If
 \begin{align}
 \exists p > m, \quad \|g_j\|_{L_{g_0}^{\frac{p}{2}}(M)}  \le C\label{ConvergenceLmNormGH}
 \end{align}
 then there exists a function $d_{\infty}:M\times M \rightarrow [0,\infty)$ so that a subsequence $d_k \rightarrow d_{\infty}$ uniformly as functions,
   \begin{align}\label{LimitingHolderEst1}
d_{\infty}(q_1,q_2) \le C'(K,m,p,I_0,D_0, C)d_0(q_1,q_2)^{\frac{p-1}{p}},\quad \forall q_1,q_2 \in M,
  \end{align}
  where $K$ is a bound on the absolute value of the sectional curvature of $M_0$, $I_0=\inj(M,g_0)$ is the injectivity radius of $M_0$, and $D_0=\diam(M_0)$ is the diameter of $M_0$.  Furthermore, if we quotient $M$ by points whose $d_{\infty}$ distance is zero, $M_{\infty}=(M/d_{\infty},d_{\infty})$,  we find Gromov-Hausdorff convergence
\begin{align}
 M_k\GHto M_{\infty}.
\end{align}
 
 If in addition
\begin{align}
\exists c > 0, \eta > 1,\quad c d_0(q_1,q_2)^{\eta} \le d_j(q_1,q_2), \quad \forall q_1,q_2\in M,  \label{metricbounds}
\end{align}
  then there exists a metric $d_{\infty}$ so that
  \begin{align}\label{LimitingHolderEst}
 c d_0(q_1,q_2)^{\eta} &\le d_{\infty}(q_1,q_2) 
 \\&\le C'(K,m,p,I_0,D_0,C)d_0(q_1,q_2)^{\frac{p-1}{p}}, \forall q_1,q_2 \in M,
  \end{align} 
 such that for $M_{\infty}=(M,d_{\infty})$ we find Gromov-Hausdorff convergence
\begin{align}
 M_k\GHto M_{\infty}.
\end{align}
\end{thm}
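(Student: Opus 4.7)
The plan is to apply the Arzel\`a--Ascoli theorem on the compact product $M\times M$ to extract a uniform subsequential limit $d_\infty$ of the distance functions $d_j$, verify that $d_\infty$ is a pseudo-metric inheriting the H\"older estimate of Theorem~\ref{MainThmEst}, and then promote uniform convergence of distance functions to Gromov--Hausdorff convergence via the standard $\varepsilon$-isometry criterion.

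Applying Theorem~\ref{MainThmEst} to each $g_j$ against the fixed background $g_0$ yields a single constant $C' = C'(K,m,p,I_0,D_0,C)$ with
\begin{equation*}
d_j(q_1,q_2) \le C'\, d_0(q_1,q_2)^{(p-m)/p} \qquad \forall j,\ \forall q_1,q_2 \in M.
\end{equation*}
The triangle inequality in $(M,d_j)$ then gives
\begin{equation*}
|d_j(q_1,q_2) - d_j(q_1',q_2')| \le d_j(q_1,q_1') + d_j(q_2,q_2') \le 2C'\bigl(d_0(q_1,q_1')^{(p-m)/p} + d_0(q_2,q_2')^{(p-m)/p}\bigr),
\end{equation*}
so $\{d_j\}$ is uniformly bounded and uniformly equicontinuous on the compact metric space $(M\times M, d_0\oplus d_0)$. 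Arzel\`a--Ascoli therefore produces a subsequence $d_k \to d_\infty$ uniformly on $M\times M$.

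Non-negativity, symmetry, vanishing on the diagonal, the triangle inequality, and the H\"older upper bound from Theorem~\ref{MainThmEst} all pass to the uniform limit, so $d_\infty$ is a continuous pseudo-metric satisfying the claimed upper estimate. Form $M_\infty = M/d_\infty$ with quotient map $\pi$. To verify $M_k \GHto M_\infty$, use $f_k = \pi\colon (M, d_k) \to M_\infty$ as an approximate isometry: surjectivity of $\pi$ is automatic, and the distance distortion
\begin{equation*}
\sup_{p,q\in M} \bigl| d_k(p,q) - d_\infty(\pi(p), \pi(q)) \bigr| = \|d_k - d_\infty\|_{C^0(M\times M)}
\end{equation*}
tends to zero by the uniform convergence just established, so $f_k$ is an $\varepsilon_k$-isometry with $\varepsilon_k \to 0$.

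For the second part, the assumed lower bound $c\,d_0(q_1,q_2)^\eta \le d_j(q_1,q_2)$ passes to the uniform limit to give $c\,d_0(q_1,q_2)^\eta \le d_\infty(q_1,q_2)$; this forces $d_\infty(q_1,q_2) > 0$ whenever $q_1 \ne q_2$, so $d_\infty$ is a genuine metric on $M$ with no quotient required, and the claimed two-sided estimate holds. Gromov--Hausdorff convergence then follows by applying the same $\varepsilon$-isometry argument to $f_k = \mathrm{id}\colon (M,d_k) \to (M, d_\infty)$. The main substance is packaged inside Theorem~\ref{MainThmEst}: once its uniform H\"older estimate is in hand, the equicontinuity for Arzel\`a--Ascoli and the approximate-isometry property for Gromov--Hausdorff convergence are standard soft consequences, so the only real obstacle is verifying that pathological behavior cannot develop in the limit, which is ruled out precisely by the uniform H\"older control built into the background metric $d_0$.
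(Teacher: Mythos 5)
Your proposal is correct and follows essentially the same approach as the paper: invoke Theorem~\ref{MainThmEst} for the uniform H\"older bound, apply Arzel\`a--Ascoli to the sequence of distance functions, and pass from uniform convergence of $d_k$ to Gromov--Hausdorff convergence (the paper cites Corollary~7.3.28 of Burago--Burago--Ivanov for this last step, whereas you spell out the $\varepsilon$-isometry argument explicitly, but these are the same standard fact). Your handling of the lower H\"older bound to promote $d_\infty$ to a genuine metric with no quotient matches the paper as well.
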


We now see that if we require a $L^{\frac{p}{2}}$ bound for $p > m$, volume convergence, and continuous convergence from below we can ensure that the limiting metric space is the particular background Riemannian manifold $M_0$. One should review Example 3.1, which first appeared in the work of the author and Sormani \cite{Allen-Sormani-2}, which shows that without the $C^0$ convergence from below that this theorem cannot be true. This example is one of a general family of examples which shows that $C^0$ convergence from below is the right condition to combine with $L^p$ convergence (or volume convergence) to imply Gromov-Hausdorff or Sormani-Wenger intrinsic flat convergence. The following theorem is related to the result of Perales, Sormani, and the author \cite{Allen-Perales-Sormani} where if one does not assume a $L^{\frac{p}{2}}$ bound for $p > m$ then one obtains just volume preserving Sormani-Wenger intrinsic flat convergence of the sequence.

\begin{thm}\label{MainThmConv}
Let $M^m$ be a smooth, closed, oriented, connected manifold $M_0=(M,g_0)$ a smooth Riemannian manifold and $M_j=(M,g_j)$ a sequence of continuous Riemannian manifolds. Then if
\begin{align}
\lp 1 - 1/j\rp g_0(v,v) \le g_j(v,v),\quad \forall p \in M, v \in T_pM,\label{metricbounds}
\end{align}
\begin{align}
\diam(M_j) \le D_0,
\end{align}
 \begin{align}
 \exists p > m, \quad \|g_j\|_{L_{g_0}^{\frac{p}{2}}(M)}\le C, \label{LmNormBound}
 \end{align}
 and 
 \begin{align}
 \vol(M_j) \rightarrow \vol(M_0)\label{VolumeConv}
 \end{align}
  then 
\begin{align}
M_j \VFto M_0,
\\ M_j\mGHto M_0.
\end{align}
\end{thm}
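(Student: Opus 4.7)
I would combine the Gromov--Hausdorff compactness of Theorem~\ref{MainThmComp} with the volume-preserving intrinsic flat convergence theorem of Perales, Sormani, and the author \cite{Allen-Perales-Sormani}, and then identify the two limits using the metric lower bound.

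First, I apply Theorem~\ref{MainThmComp} to extract a subsequence. The $L^{p/2}$ bound (\ref{LmNormBound}) yields Gromov--Hausdorff precompactness, and the lower bound (\ref{metricbounds}) implies $d_j \ge \sqrt{1-1/j}\,d_0$, which passes to any uniform limit to give $d_\infty \ge d_0$. In particular $d_\infty$ is a genuine metric on $M$ (no collapsing of distinct points), so after extracting a subsequence $\{M_k\}$ I obtain $M_k \GHto M_\infty = (M, d_\infty)$ with $d_\infty \ge d_0$. Separately, the hypotheses (\ref{metricbounds}), the diameter bound, and the volume convergence (\ref{VolumeConv}) are precisely the setting of \cite{Allen-Perales-Sormani}, which gives volume-preserving intrinsic flat convergence $M_j \VFto M_0$ for the full sequence.

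The main obstacle is then to identify $M_\infty$ with $M_0$, i.e.\ to upgrade $d_\infty \ge d_0$ to equality. Since $\vol(M_j) \to \vol(M_0) > 0$, the intrinsic flat limit $M_0$ is noncollapsed, so a Sormani--Wenger-type comparison between intrinsic flat and Gromov--Hausdorff limits provides an isometric embedding of $(M, d_0)$ into $(M, d_\infty)$. Because both spaces share the underlying set $M$ and the embedding is natural with respect to the identification of points coming from the uniform convergence $d_k \to d_\infty$, it coincides with the identity on $M$, giving $d_\infty \le d_0$. Combined with $d_\infty \ge d_0$, I conclude $d_\infty = d_0$ and therefore $M_\infty = M_0$. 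Since every subsequence has a further subsequence converging to the same limit $M_0$, the full sequence satisfies $M_j \GHto M_0$, upgraded to $M_j \mGHto M_0$ by the uniform diameter bound. Together with the intrinsic flat convergence established earlier, both stated conclusions follow.
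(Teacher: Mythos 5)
Your argument follows the same overall route as the paper: obtain Gromov--Hausdorff subconvergence to some $(M,d_\infty)$ from Theorem~\ref{MainThmComp}, obtain $M_j\VFto M_0$ from the Volume Above Distance Below theorem of \cite{Allen-Perales-Sormani}, and then identify the two limits via the Sormani--Wenger comparison (Theorem 3.20 of \cite{SW-JDG}, as restated in \cite{Sormani-ArzAsc}) which embeds the intrinsic flat limit into the GH limit, with measured GH convergence following from the volume-preserving intrinsic flat convergence. You are actually a bit more careful than the paper in two places: you explicitly pass the lower bound $\sqrt{1-1/j}\,d_0\le d_j$ to the limit to get $d_\infty\ge d_0$ (ensuring no quotienting of $M$), and you explicitly run the ``every subsequence has a further subsequence with the same limit'' argument to promote subsequential GH convergence to convergence of the full sequence, a step the paper elides. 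The one spot where your write-up is slightly hand-wavy --- asserting that the Sormani--Wenger embedding ``coincides with the identity on $M$'' --- is no less rigorous than the paper's terse ``$M_0\subset M_\infty$ and hence $M_0=M_\infty$,'' so this should be counted as the same argument rather than a gap you introduced.
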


\begin{rmrk}
 In particular, this theorem shows that when combined with $C^0$ convergence from below, the dimension is the correct threshold for $L^{\frac{p}{2}}$ bounds to distinguish between when one should expect Sormani-Wenger intrinsic flat convergence only and when one should expect that convergence as well as Gromov-Hausdorff convergence. When one only observes an $L^{\frac{m}{2}}$ bound one should expect bubbling to occur along the sequence (as in Example 3.3) so that the sequence will not converge to a Riemannian manifold in either sense. When one observes $L^{\frac{m}{2}}$ convergence (or volume convergence) one should only expect Sormani-Wenger intrinsic flat convergence to the Riemannian manifold $M_0$ (as in Example 3.4). When one observes $L^{\frac{p}{2}}$, $p > m$ convergence (or volume convergence combined with an  $L^{\frac{p}{2}}$, $p > m$ bound) then one should expect Gromov-Hausdorff and Sormani-Wenger intrinsic flat convergence to a Riemannian manifold as in Theorem \ref{MainThmConv} and Example 3.2. These important examples are reviewed in section \ref{sect: examples} which were originally given by the author and Sormani in \cite{Allen-Sormani-2}. 
 
 We lastly note that one could replace \eqref{LmNormBound} and \eqref{VolumeConv} with $L^{\frac{p}{2}}$, $p > m$ convergence of norm for the sequence of Riemannian metrics and obtain the same conclusion.
\end{rmrk}

In \cite{AHPPW} by Hernandez, Parise, Payne, Shengwang, and the author, as well as, \cite{Allen-Tori} by the author, special cases of Gromov's conjecture on tori with almost non-negative scalar curvature were solved. In both cases, $L^p$ bounds on a sequence of metrics was obtained first by studying the scalar curvature PDE for warped products and conformal metrics, respectively. Then, the maximum principle and mean value inequality were used to obtain the necessary $C^0$ bound from below to apply the main theorem of Sormani and the author in \cite{Allen-Sormani, Allen-Sormani-2} and \cite{Allen-Perales-Sormani}, respectively.  Hence the theorem of this paper is a type of analogue of those results designed to be applied in a similar way in a case where Gromov-Hausdorff convergence is expected and $L^{\frac{p}{2}}$, $p > m$ bounds are naturally obtained or assumed for the metric. 

In \cite{Allen-Tori}, the author proves a version of Gromov's conjecture on tori with almost non-negative scalar curvature \cite{GroD} in the conformal case. If one replaces the uniform volume bound (equation (3) of Theorem 1.2 and equation (12) of Theorem 1.5) and the uniform volume bound on balls (equation (6) of Theorem 1.2 and equation (16) of Theorem 1.5) of the main theorems of \cite{Allen-Tori} with a $L^{\frac{p}{2}}$, $p > m$ bound on the metric (or equivalently a $L^p$, $p > m$ bound on the conformal factor) one can immediately conclude measured Gromov-Hausdorff convergence by applying Theorem \ref{MainThmConv}, as we state precisely below. This shows an immediate application of the main theorem of this paper to problems involving scalar curvature. This also shows that one should expect measured Gromov-Hausdorff convergence in Gromov's conjecture if the stronger assumption of an $L^{\frac{p}{2}}$, $p > m$ bound on the metric is assumed. 

\begin{thm}\label{TorusTheorem}
Let $g_0$ be a flat torus where $\mathbb{T}^m_0=(\mathbb{T}^m,g_0)$. For a sequence of Riemannian $m-$manifolds $M_j=(\mathbb{T},g_j)$, $m \ge 3$ satisfying   \begin{align} \label{HypothesisMainThm2}
R_{g_j} \ge -\frac{1}{j},    \,\,\,  \,\,\, \|g_j\|_{L_{g_0}^{\frac{p}{2}}(M)} \le V_0, \,\, p > m,
\end{align}
where $R_{g_j}$ is the scalar curvature of $M_j$ and so that $M_j$ is conformal to $\tilde{M}_{0,j}=(\mathbb{T}^m,\tilde{g}_{0,j})$, a metric with constant zero or negative scalar curvature and unit volume, i.e.  $g_j = e^{2f_j} g_{0,j}$. Furthermore, assume that
\begin{align}
\tilde{g}_{0,j} \rightarrow g_0 \text{ in } C^1,
\end{align}
and
\begin{align}
\int_{\mathbb{T}^m} e^{-2f_j} d V_{\tilde{g}_{0,j}} \le C,
\end{align} 
then there exists a subsequence so that $M_k$ converges in the volume preserving intrinsic flat sense and the measured Gromov-Hausdorff sense to a flat torus
\begin{align}
M_k &\VFto \bar{\mathbb{T}}_0^m,
\\ M_k &\mGHto \bar{\mathbb{T}}_0^m,
\end{align}
where $\bar{\mathbb{T}}_0^m = (\Tor^m,\bar{g}_0=c_{\infty}^2g_0)$, $c_{\infty}^2=\displaystyle\lim_{k \rightarrow \infty}(\overline{e^{-f_k}})^{-2}= \lim_{k \rightarrow \infty} \left( \dashint_{\mathbb{T}^m} e^{-f_k}dV_{\tilde{g}_{0,j}} \right)^{-2}$.
\end{thm}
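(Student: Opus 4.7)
The plan is to follow the strategy of Theorem 1.5 of \cite{Allen-Tori} nearly verbatim, substituting the uniform $L^{p/2}_{g_0}$ bound on $g_j$ in place of the uniform volume and volume-on-balls bounds that appear there, and then invoking Theorem \ref{MainThmConv} at the very end to upgrade the conclusion from volume preserving Sormani-Wenger intrinsic flat convergence to measured Gromov-Hausdorff convergence. Structurally this works because an $L^{p/2}$ bound with $p>m$ is strictly stronger than the volume bound used in \cite{Allen-Tori} (by H\"older's inequality against a fixed background measure) and, simultaneously, is exactly the hypothesis required to apply Theorem \ref{MainThmEst} and Theorem \ref{MainThmConv}.

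First I would extract the pointwise lower bound on the conformal factor. Setting $u_j = e^{(m-2)f_j/2}$, the hypothesis $R_{g_j}\ge -1/j$ becomes the Yamabe-type inequality
\begin{align}
-\frac{4(m-1)}{m-2}\Delta_{\tilde{g}_{0,j}} u_j + R_{\tilde{g}_{0,j}} u_j \ge -\frac{1}{j}\, u_j^{(m+2)/(m-2)}.
\end{align}
Since $R_{\tilde{g}_{0,j}}\le 0$ is constant and $\tilde{g}_{0,j}\to g_0$ in $C^1$, one runs the De Giorgi-Nash-Moser mean value inequality exactly as in \cite{Allen-Tori}, using the integral bound $\int e^{-2f_j}\,dV_{\tilde{g}_{0,j}}\le C$ to control the relevant $L^r$ norms of $u_j^{-1}$, to obtain a uniform positive pointwise lower bound on $u_j$ and, after extracting a subsequence, the identification
\begin{align}
\dashint_{\mathbb{T}^m} e^{-f_k}\,dV_{\tilde{g}_{0,k}} \,\longrightarrow\, c_\infty^{-1}, \qquad g_k \,\ge\,(1-\delta_k)\,c_\infty^2\,g_0,\quad \delta_k\to 0.
\end{align}

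Next I would verify the remaining hypotheses of Theorem \ref{MainThmConv} relative to the background $\bar{g}_0 = c_\infty^2\, g_0$. The $L^{p/2}_{\tilde{g}_{0,j}}$ bound on $g_j$ transfers to an $L^{p/2}_{\bar{g}_0}$ bound because $\tilde{g}_{0,j}\to g_0$ in $C^1$. The uniform diameter bound on $M_j$ is then immediate from Theorem \ref{MainThmEst}, which yields $\diam(M_j)\le C'\,D_0^{(p-m)/p}$. Finally, volume convergence $\vol(M_j)\to\vol(\bar{\mathbb{T}}_0^m)$ follows from the conformal volume formula $dV_{g_j}=e^{mf_j}\,dV_{\tilde{g}_{0,j}}$: the uniform $L^{p/2}$ bound on $g_j=e^{2f_j}\tilde{g}_{0,j}$ forces $e^{mf_j}$ to be bounded in $L^{p/m}(g_0)$ with $p/m>1$, hence equi-integrable, and the $C^0$ lower bound combined with the average identification above forces $e^{f_j}\to c_\infty$ strongly enough in $L^1$ to pass to the limit. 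Applying Theorem \ref{MainThmConv} with background $\bar{g}_0$ then delivers both convergences simultaneously.

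The main obstacle is the uniform pointwise lower bound on $u_j$: without it one loses both the $C^0$-from-below hypothesis of Theorem \ref{MainThmConv} and the identification of the Gromov-Hausdorff limit as the specific flat torus $\bar{\mathbb{T}}_0^m$. Producing this lower bound is precisely the technical heart of \cite{Allen-Tori}, and it is exactly where the two hypotheses $\int e^{-2f_j}\,dV_{\tilde{g}_{0,j}}\le C$ and $\tilde{g}_{0,j}\to g_0$ in $C^1$ are used in an essential way; once it is in hand, everything beyond is bookkeeping combined with the new input provided by Theorem \ref{MainThmConv}.
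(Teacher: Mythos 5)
Your proposal is correct in outline, but it takes a genuinely different route from the paper. The paper does not re-open the interior of \cite{Allen-Tori}: it instead cites Theorems 1.2/1.4 of \cite{Allen-Tori} as black boxes (splitting the subsequence by the sign of the constant scalar curvature of $\tilde{g}_{0,j}$, and using Lemma 4.4 of \cite{Allen-Tori} to show the $L^{p/2}$, $p>m$ bound supplies the uniform integrability hypothesis) to obtain volume preserving SWIF convergence to $\bar{\mathbb{T}}_0^m$ directly. It then applies the \emph{compactness} theorem, Theorem \ref{MainThmComp}, not the convergence theorem \ref{MainThmConv}, to extract a GH limit $(\mathbb{T}^m/d_\infty,d_\infty)$, and identifies that quotient with $\bar{\mathbb{T}}_0^m$ via the SWIF-vs-GH limit comparison of Theorem 3.20 of \cite{SW-JDG} (as in Theorem 2.30 of \cite{Sormani-ArzAsc}); measured GH convergence then follows from Theorem 2.4 of \cite{Allen-Sormani-2}. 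Your route, by contrast, re-derives the pointwise $C^0$ lower bound on the conformal factor and the volume convergence and then feeds these directly into Theorem \ref{MainThmConv} with background $\bar{g}_0=c_\infty^2 g_0$. Both get there; the trade-off is that your route is conceptually cleaner (it exercises the main new theorem of this paper head-on), while the paper's route is more economical in that it never needs the explicit $C^0$ lower bound and never needs to verify the volume-convergence hypothesis of Theorem \ref{MainThmConv} separately, since those facts are already packaged into the conclusions of \cite{Allen-Tori}.

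Two places in your write-up deserve sharpening if you want to carry out your route in full. First, your volume convergence step is gestured at rather than argued: $L^{p/m}$ equi-integrability of $e^{mf_j}$ together with convergence of $\dashint e^{-f_j}$ does not immediately give $\int e^{mf_j}\,dV_{\tilde g_{0,j}}\to c_\infty^m\vol(g_0)$ without some form of pointwise or strong convergence of $e^{f_j}$ off a small set, which itself comes out of the mean value machinery; you should either prove this or cite it explicitly from \cite{Allen-Tori}. Second, Theorem \ref{MainThmConv} is stated with the lower bound $(1-1/j)g_0\le g_j$; you need the analogous $(1-\delta_j)\bar g_0\le g_j$ with $\delta_j\to 0$, which is fine as the proof is insensitive to the particular sequence $\delta_j$, but the rescaling of the $L^{p/2}$ norm and of the geometric constants ($K$, $I_0$, $D_0$) from $g_0$ to $\bar g_0=c_\infty^2 g_0$ should be recorded since $c_\infty$ is a limit quantity determined only after extracting a subsequence.
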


In section \ref{sect: review} we review important definitions and theorems which will be used throughout this paper.

In section \ref{sect: examples} we review examples  of Sormani and the author \cite{Allen-Sormani-2} which explains the intuition for why one should expect an $L^{\frac{p}{2}}$, $p > m$ bound to imply a H\"{o}lder distance bound as well as why the $C^0$ bound from below on the metric is necessary in Theorem \ref{MainThmConv}.

In section \ref{sec:MainThmProof} we introduce the important construction of a symmetric family of curves between $q_1$ and $q_2$ of radius $\varepsilon$ and use this construction to prove the main estimate of this paper. We finish by using this H\"{o}lder estimate in order to prove the main compactness and convergence theorem as well as a version of the geometric stability of the scalar torus rigidity theorem.\\

\noindent{\bf Acknowledgements:} This research was funded in part by NSF DMS - 1612049. 

\section{Background} \label{sect: review}
In this section we review important definitions and theorems which will be used throughout the paper. 
\subsection{Gromov-Hausdorff Distance}

If we have a complete metric space $(Z,d_Z)$ and subsets $X_1, X_2 \subset Z$ then we can define the Hausdorff distance between them to be
\begin{align}
d_H^Z(X_1,X_2)=\inf\{\varepsilon > 0: X_1\subset T_{\varepsilon}(X_2) \text{ and }X_2\subset T_{\varepsilon}(X_1) \},
\end{align}
where $T_{\varepsilon}(X_i)=\{y \in Z: d(y,X_i)\le \varepsilon\}$, $i=1,2$. 

Now if one considers two metric spaces $(Y_1,d_{Y_1})$ and $(Y_2,d_{Y_2})$ we say that $\varphi_i: Y_i\rightarrow Z$, $i=1,2$ is a distance preserving map if
\begin{align}
d_{Y_i}(q_1,q_2)= d_Z(\varphi_i(q_1),\varphi_i(q_2)),\quad \forall q_1,q_2 \in Y_i.
\end{align}
We can now define the Gromov-Hausdorff distance between $(Y_1,d_{Y_1})$ and $(Y_2,d_{Y_2})$ to be
\begin{align}
d_{GH}((Y_1,d_{Y_1}),(Y_2&,d_{Y_2}))=\inf\{d_H^Z(\varphi_1(Y_1),\varphi_2(Y_2)):
\\&Z \text{ complete }, \varphi_i:Y_i\rightarrow Z \text{ distance preserving}\}.
\end{align}

We say that a sequence of metric spaces $(Y_i,d_{Y_i})$ converges in the Gromov-Hausdorff sense to a metric space $(Y_{\infty},d_{\infty})$ if
\begin{align}
d_{GH}((Y_i,d_{Y_i}),(Y_{\infty},d_{\infty})) \rightarrow 0.
\end{align}

For a sequence of Riemannian manifolds $M_i=(M,g_i)$ which are Gromov-Hausdorff converging to a Riemannian manifold $M_{\infty}=(M,g_{\infty})$ we say that they converge in the measured Gromov-Hausdorff sense if we also have that
\begin{align}
\vol_{g_j}(B_{g_j}(p,r))\rightarrow \vol_{g_{\infty}}(B_{g_{\infty}}(p,r)),\quad \forall p \in M, r > 0.
\end{align}

Gromov then showed that this distance is a true distance on compact metric spaces. Gromov's compactness theorem then says that if the diameter of a sequence of metric spaces is bounded and for each $r > 0$ the maximum number of disjoint balls of radius $r$  is uniformly bounded over the sequence then a subsequence exists which Gromov-Hausdorff converges to a metric space. In the case where the sequence of metric spaces are all defined on the same set $X$ and H\"{o}lder bounds are known one can obtain compactness from the standard Arzella-Ascolli theorem applied to the distance functions which we now state formally so that we can apply it later to prove Theorem \ref{MainThmComp}.

\begin{thm}\label{Arzella-Ascolli}
Let $X_{\alpha}=(X,d_{\alpha})$, $\alpha \in \N \cup \{0\}$ be metric spaces so that there exists a $c, C >0$, $\alpha \ge 1$, and $0<\beta \le 1$ so that
\begin{align}
cd_0(q_1,q_2)^{\alpha} \le d_j(q_1,q_2) \le C d_0(q_1,q_2)^{\beta}, \quad \forall q_1,q_2 \in M
\end{align}
then there exists a metric $d_{\infty}$ and a metric space $X_{\infty}=(X,d_{\infty})$ so that
\begin{align}
X_j \GHto X_{\infty},
\end{align}
where 
\begin{align}
cd_0(q_1,q_2)^{\alpha} \le d_{\infty}(q_1,q_2) \le C d_0(q_1,q_2)^{\beta}, \quad \forall q_1,q_2 \in X.
\end{align}
\end{thm}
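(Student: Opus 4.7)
The plan is to view the distance functions $d_j \colon X\times X \to [0,\infty)$ as a sequence of real-valued functions on the product space and apply a classical Arzelà-Ascoli argument, using the two-sided bound relative to $d_0$. Throughout I will work under the natural assumption that $(X,d_0)$ is compact (which is the setting in which this theorem will be invoked to prove Theorem~\ref{MainThmComp}); otherwise one replaces uniform convergence on $X\times X$ by uniform convergence on compact subsets and appeals to a diagonalization argument.

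First I would establish equicontinuity of the family $\{d_j\}$ on $X\times X$ equipped with the product metric induced by $d_0$. For any $q_1,q_2,q_1',q_2' \in X$, the triangle inequality for $d_j$ together with the upper bound $d_j(\cdot,\cdot) \le C d_0(\cdot,\cdot)^{\beta}$ yields
\begin{align}
|d_j(q_1,q_2) - d_j(q_1',q_2')| &\le d_j(q_1,q_1') + d_j(q_2,q_2') \\
&\le C\bigl(d_0(q_1,q_1')^{\beta} + d_0(q_2,q_2')^{\beta}\bigr),
\end{align}
independent of $j$. Combined with the pointwise bound $0 \le d_j(q_1,q_2) \le C\diam(X_0)^{\beta}$, compactness of $X\times X$ and Arzelà-Ascoli produce a subsequence $d_k$ that converges uniformly on $X\times X$ to a continuous function $d_\infty$.

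Next I would verify that $d_\infty$ is itself a metric satisfying the desired bounds. Symmetry and the triangle inequality pass to the pointwise limit immediately. For non-degeneracy, taking the limit in the lower bound gives
\begin{align}
cd_0(q_1,q_2)^{\alpha} \le d_\infty(q_1,q_2),
\end{align}
so $d_\infty(q_1,q_2) = 0$ forces $d_0(q_1,q_2) = 0$ and hence $q_1 = q_2$. The upper bound $d_\infty(q_1,q_2) \le C d_0(q_1,q_2)^{\beta}$ similarly passes to the limit. Finally, to conclude Gromov-Hausdorff convergence, I would use the identity map $\mathrm{id} \colon (X,d_k) \to (X,d_\infty)$, which is surjective and therefore automatically $0$-dense, and whose metric distortion is
\begin{align}
\sup_{q_1,q_2 \in X} |d_k(q_1,q_2) - d_\infty(q_1,q_2)| \;=:\; \varepsilon_k \to 0
\end{align}
by uniform convergence. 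This makes $\mathrm{id}$ an $\varepsilon_k$-isometry, so $d_{GH}(X_k,X_\infty) \le 2\varepsilon_k \to 0$.

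The only step that requires any care is equicontinuity: it is essential that the upper Hölder bound holds uniformly in $j$ with the same constant $C$ and exponent $\beta$, because this is what turns the bound on $d_j$ into a modulus of continuity for $d_j$ as a function of its two arguments via the triangle inequality. Once that is in place, everything else is soft: the metric axioms and the GH conclusion follow mechanically from uniform convergence of the distance functions on $X\times X$, and the lower bound enters only to guarantee non-degeneracy of $d_\infty$.
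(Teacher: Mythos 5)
Your proof is correct and matches the approach the paper itself implicitly takes: the paper simply states this theorem without proof and, in the proof of Theorem~\ref{MainThmComp}, invokes Arzel\`a--Ascoli to extract a uniform limit $d_\infty$ and then cites Corollary~7.3.28 of \cite{BBI} to pass from uniform convergence of distance functions to Gromov--Hausdorff convergence. Your argument is essentially a self-contained proof of that corollary via the $\varepsilon$-isometry estimate, together with the (correct) observations that equicontinuity comes from the uniform upper H\"older bound plus the triangle inequality for $d_j$, and that non-degeneracy of $d_\infty$ comes from the lower bound; no gap.
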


One can compare this to the compactness theorem in the appendix of \cite{HLS} where Lipshchitz bounds are assumed and GH and Sormani-Wenger Intrinsic Flat (SWIF) convergence is obtained. Also, in the appendix of \cite{Allen-Bryden} where a Lipschitz bound from below and H\"{o}lder bound from above are assumed and GH and SWIF convergence is obtained.

\subsection{Lebesgue Norm of Riemannian Metrics}

When looking for estimates on sequences of Riemannian manifolds which are weaker then GH distance one is naturally lead to consider $L^p$ notions of distance between Riemannian manifolds which we now review. If $g_0,g_1$ are two Riemannian metrics defined on the manifold $M$ then we can define the norm of $g_1$ with respect to $g_0$ in coordinates to be
\begin{align}
|g_1|_{g_0}=\sqrt{(g_0)^{ij}(g_0)^{lm} (g_1)_{il}(g_1)_{jm}}.
\end{align}
This allows us to define the $L^p$ norm of $g_1$ with respect to $g_0$ to be
\begin{align}
\|g_1\|_{L^p_{g_0}(M)}=\left(\int_M  |g_1|_{g_0}^p dV_{g_0}\right)^{\frac{1}{p}},
\end{align}
where $dV_{g_0}$ is the volume form for $g_0$. We say that a sequence of Riemannian metrics defined on $M$, $g_j$, converges to $g_{\infty}$ in $L^p_{g_0}(M)$ if
\begin{align*}
\|g_j-g_{\infty}\|_{L^p_{g_0}(M)} \rightarrow 0,
\end{align*}
and we say that $g_j$ converges to $g_{\infty}$ in $L^p$ norm if
\begin{align*}
\|g_j\|_{L^p_{g_0}(M)} \rightarrow \|g_{\infty}\|_{L^p_{g_0}(M)}.
\end{align*}
See subsection 2.6 of \cite{Allen-Sormani-2} for some standard results on $L^p$ convergence.

We now include a proof of a simple lemma which is useful for comparing norms of two different Riemannian metrics.

\begin{lem}\label{NormComparisonLemma}
 For $v \in T_pM$ and $g_0,g_1$ Riemannian metrics on $M$ we have the inequality
 \begin{align}
 |v|_{g_1}\le |g_1|_{g_0}^{\frac{1}{2}}|v|_{g_0}.
 \end{align}
 \end{lem}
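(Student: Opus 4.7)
The plan is to reduce the inequality to a purely linear-algebraic statement at the point $p$ and then exploit simultaneous diagonalization of the two positive definite bilinear forms $g_0(p)$ and $g_1(p)$ on $T_pM$.

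First I would pick a basis $\{e_1,\dots,e_m\}$ of $T_pM$ that is orthonormal with respect to $g_0$ and that diagonalizes $g_1$, which is possible because $g_0$ is an inner product and $g_1$ is symmetric. In this basis $(g_0)_{ij}=\delta_{ij}$, $(g_0)^{ij}=\delta^{ij}$, and $(g_1)_{ij}=\lambda_i\delta_{ij}$ for some positive eigenvalues $\lambda_1,\dots,\lambda_m$. The definition of $|g_1|_{g_0}$ then collapses to
\begin{align*}
|g_1|_{g_0}^2 \;=\; \sum_{i,j} (g_1)_{ij}^2 \;=\; \sum_{i=1}^m \lambda_i^2.
\end{align*}

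Next, for an arbitrary $v=\sum_i v^i e_i$ I would compute
\begin{align*}
|v|_{g_0}^2 \;=\; \sum_i (v^i)^2, \qquad |v|_{g_1}^2 \;=\; \sum_i \lambda_i (v^i)^2.
\end{align*}
The key observation is that each individual eigenvalue is bounded by the full norm, since $\lambda_i\ge 0$ gives $\lambda_i \le \sqrt{\sum_j \lambda_j^2}=|g_1|_{g_0}$. Plugging this into the expression for $|v|_{g_1}^2$ yields
\begin{align*}
|v|_{g_1}^2 \;\le\; |g_1|_{g_0}\sum_i (v^i)^2 \;=\; |g_1|_{g_0}\,|v|_{g_0}^2,
\end{align*}
and taking square roots gives the claim.

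I do not anticipate any real obstacle; the only subtle point is to verify that the quantity $|g_1|_{g_0}$ defined coordinate-wise in the paper is genuinely tensorial (equivalently, invariant under change of basis), so that evaluating it in the special simultaneously-diagonalizing basis is legitimate. This is immediate because the defining expression $(g_0)^{ij}(g_0)^{lm}(g_1)_{il}(g_1)_{jm}$ is a full contraction of tensors and therefore independent of the choice of basis.
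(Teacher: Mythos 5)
Your proof is correct but follows a genuinely different route from the paper's. The paper's argument is coordinate-free in spirit: it treats $|v|_{g_1}^2$ as the $g_0$-inner product of the tensors $g_1$ and $dv\otimes dv$ (i.e.\ $g_1(v,v)=(g_0)^{ij}(g_0)^{pq}(g_1)_{ip}v_jv_q$ with indices raised and lowered by $g_0$), and then invokes the Cauchy--Schwarz inequality for that inner product together with $|dv\otimes dv|_{g_0}=|v|_{g_0}^2$. You instead reduce to linear algebra at a point, simultaneously diagonalize $g_0$ and $g_1$ via a $g_0$-orthonormal eigenbasis of $g_1$, and then use that the largest eigenvalue (the operator norm of $g_1$ relative to $g_0$) is bounded by the Euclidean norm of the eigenvalue vector (the Frobenius norm, which is exactly $|g_1|_{g_0}$). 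Both arguments are short and correct; the paper's is slicker and avoids the spectral theorem, whereas yours makes transparent that the inequality is really ``operator norm $\le$ Frobenius norm'' and clarifies when equality can hold (rank one $g_1$, $v$ in the top eigenspace). Your remark about tensoriality of $|g_1|_{g_0}$ is the right thing to check before passing to a special basis, and your justification (full contraction of tensors) is adequate.
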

 \begin{proof}
 Notice by the Cauchy-Schwarz inequality
 \begin{align}
 |g_0(g_1,dv \otimes dv)| \le |g_1|_{g_0} |dv \otimes dv |_{g_0} = |g_1|_{g_0} |v|_{g_0}^2,
 \end{align}
 and then we find
  \begin{align}
 g_0(g_1,dv \otimes dv) &=(g_0)^{ij}(g_0)^{pq} (g_1)_{ip}(dv \otimes dv)_{jq} 
 \\&= (g_0)^{ij}(g_0)^{pq} (g_1)_{ip}v_j  v_q
 \\&= (g_1)^{jq}v_jv_q = |v|_{g_1}^2,
 \end{align}
 which gives the desired result. 
 \end{proof}

\subsection{Review of Volume Above Distance Below}

The Perales, Sormani, and the author in \cite{Allen-Perales-Sormani} showed that if one has natural geometric assumptions on a sequence of Riemannian manifolds then one can guarantee volume preserving intrinsic flat convergence to a particular Riemannian manifold. It is by combining the following theorem with Theorem \ref{MainThmEst} which allows us to conclude Theorem \ref{MainThmConv}. Many examples which justify the hypotheses of this main theorem were given as warped products in \cite{Allen-Sormani} and conformal metrics in \cite{Allen-Sormani-2}. Important examples to the intuition of this paper are reviewed in section \ref{sect: examples}.

\begin{thm}\label{VADB} 
Suppose we have a fixed  compact, connected, oriented Riemannian manifold, $M_0=(M^n,g_0)$,
without boundary and
a sequence of Riemannian manifolds $M_j=(M, g_j)$ with
\be 
g_0(v,v) \le g_j(v,v) \qquad \forall p \in M, v\in T_pM
\ee 
and a uniform upper bound on diameter
\be
\diam_j(M_j) \le D_0
\ee
and volume convergence
\be
\vol(M_j) \to \vol(M_0)
\ee
then $M_j$ converge to $M_0$ in the volume preserving intrinsic flat sense
\be
M_j \VFto M_0.
\ee
\end{thm}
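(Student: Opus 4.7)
The plan is to extract a subsequential intrinsic flat limit of $M_j$ via Wenger's compactness theorem and then identify that limit with $M_0$ using the pointwise metric bound together with the mass equality forced by volume convergence. Let $T_j$ denote the integral current associated to the oriented manifold $M_j$. Then $\mass(T_j)=\vol(M_j)$ is uniformly bounded (since the sequence converges), $\partial T_j = 0$ because $M$ is closed, and the underlying space has diameter at most $D_0$. Wenger's compactness theorem yields, after passing to a subsequence, an integral current space $(X_\infty,d_\infty,T_\infty)$ together with isometric embeddings of $M_j$, $M_0$, and $X_\infty$ into a common complete metric space $Z$ in which the currents associated to $M_j$ converge to $T_\infty$ in the flat norm.

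The hypothesis $g_0(v,v)\le g_j(v,v)$ makes the identity $\pi_j:M_j\to M_0$ a surjective $1$-Lipschitz map. Applying the area formula shows that the pushforward $\pi_{j*}T_j$ equals the fundamental class $T_0$ of $M_0$ with $\mass(T_0)=\vol(M_0)$. The family $\{\pi_j\}$ is equicontinuous, and an Arzel\`a--Ascoli argument in the common space $Z$ yields a $1$-Lipschitz limit map $\pi_\infty:X_\infty\to M_0$; continuity of pushforward under flat convergence for uniformly Lipschitz maps then gives $\pi_{\infty*}T_\infty = T_0$. Lower semicontinuity of mass under flat convergence gives $\mass(T_\infty)\le \lim\vol(M_j)=\vol(M_0)$, while the Lipschitz property of $\pi_\infty$ gives $\mass(\pi_{\infty*}T_\infty)\le\mass(T_\infty)$. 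Since $\mass(\pi_{\infty*}T_\infty)=\mass(T_0)=\vol(M_0)$, all three masses must coincide.

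The crux, and what I expect to be the main obstacle, is to upgrade the mass equality $\mass(T_\infty)=\mass(\pi_{\infty*}T_\infty)$ to a genuine isometry between $(X_\infty,d_\infty)$ and $(M,g_0)$. This is a rigidity statement for $1$-Lipschitz surjections between integral current spaces: equality of mass together with the area formula of Ambrosio--Kirchheim forces the Jacobian of $\pi_\infty$ to equal $1$ almost everywhere on the canonical set of $T_\infty$, so $\pi_\infty$ cannot strictly contract distances on any positive-measure subset. Combined with the smoothness of $(M,g_0)$ and the fact that any cancellation or folding in $\pi_{j*}T_j$ would be visible in a strict inequality of masses, this promotes $\pi_\infty$ to a metric isometry onto $M_0$. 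Once $X_\infty$ is identified with $M_0$, the uniqueness of the limit across subsequences upgrades subsequential convergence to full flat convergence, i.e., $M_j\VFto M_0$.
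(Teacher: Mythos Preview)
First, note that this theorem is not proved in the present paper: it is quoted from \cite{Allen-Perales-Sormani} as a background result (see Section~\ref{sect: review}). So there is no ``paper's own proof'' to compare against here; your proposal is effectively a sketch of the argument in \cite{Allen-Perales-Sormani}, and the broad strategy you outline --- Wenger compactness, the $1$-Lipschitz identity maps $\pi_j:M_j\to M_0$, and a mass-rigidity argument for the limiting map --- is indeed the one used there.

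That said, your sketch has a genuine gap at exactly the point you flag as the crux, and the hand-wave you offer does not close it. From $\mass(T_\infty)=\mass(\pi_{\infty*}T_\infty)$ together with the $1$-Lipschitz bound, the Ambrosio--Kirchheim area formula gives that the metric differential of $\pi_\infty$ is an isometry $\hm^n$-a.e.\ on the canonical set. This is an \emph{infinitesimal} statement; it does not by itself yield that $\pi_\infty$ is injective or that $d_\infty(x,y)=d_0(\pi_\infty(x),\pi_\infty(y))$ globally. The limit $X_\infty$ is a priori only an integral current space, not a manifold, so one cannot simply integrate the metric differential along curves in $X_\infty$. The actual work in \cite{Allen-Perales-Sormani} is to exploit the smooth structure of the \emph{target} $M_0$: one lifts $g_0$-geodesics back through $\pi_\infty$ and uses the a.e.\ isometry of the differential, together with careful control of the multiplicity and the canonical set, to produce curves in $X_\infty$ of the correct length, forcing $d_\infty\le d_0\circ(\pi_\infty\times\pi_\infty)$ and hence equality. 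Your sentence ``$\pi_\infty$ cannot strictly contract distances on any positive-measure subset'' conflates the infinitesimal and global statements and does not constitute this argument.

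A secondary technical point: the construction of $\pi_\infty$ via an Arzel\`a--Ascoli argument is more delicate than you indicate, since the domains $M_j$ are varying subsets of $Z$. One must either extend the $\pi_j$ to Lipschitz maps on all of $Z$ (e.g.\ via McShane--Whitney, componentwise after a Kuratowski-type embedding of $M_0$) or invoke the framework for converging Lipschitz maps between intrinsic-flat-converging spaces developed in the Sormani--Wenger literature. Likewise, ``continuity of pushforward under flat convergence'' requires the maps to be defined compatibly on a common space. These issues are handled in \cite{Allen-Perales-Sormani}, but they are not automatic.
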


We note that convergence in $L^{\frac{p}{2}}$ norm of $g_j$ to $g_0$ combined with the $C^0$ convergence from below implies the required convergence of volume assumed in Theorem \ref{VADB}. See Lemma 4.3 of \cite{Allen-Sormani-2} for a proof of this result.

\section{Examples of Sequences of Conformal Manifolds}\label{sect: examples}

Here we review some examples which first appeared in the work of the author and Sormani \cite{Allen-Sormani-2}. Every example is conformal to the torus or the sphere and we note that $L^{\frac{p}{2}}$ convergence of Riemannian metrics, $g_j=f_j^2g_0$, is equivalent to $L^p$ convergence of the conformal factors $f_j$. To summarize the main intuition to take away from these examples we have included Table 1 and Figure 1. 

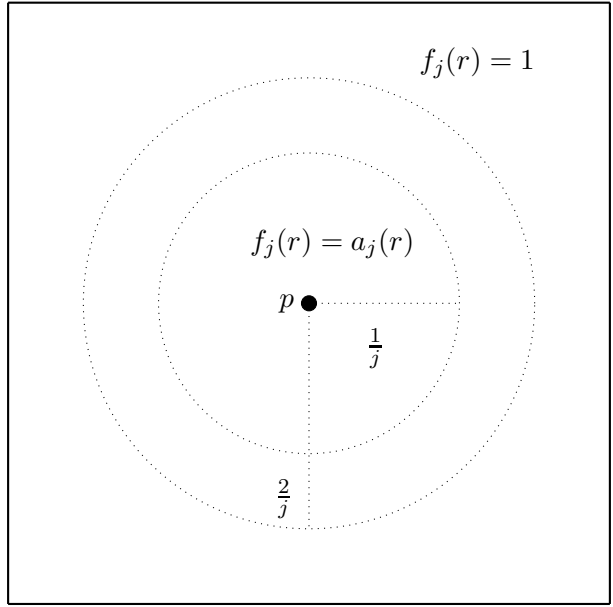
\begin{figure}\label{TorusExPic}
 \label{fig:SymmetricFamilyCurves}
 \begin{tikzpicture}[scale=4]
 \draw[thick] (-1,1) -- (1,1);
 \draw[thick] (-1,-1) -- (-1,1);
 \draw[thick] (1,-1) -- (-1,-1);
 \draw[thick] (1,1) -- (1,-1);
  \draw[dotted] (0,0) -- (.5,0);
  \draw[dotted] (0,0) -- (0,-.75);
  \node[left, outer sep=2pt] at (0,0) {$p$};
   \node[left, outer sep=2pt] at (.8,.8) {$f_j(r)=1$};
   \node[left, outer sep=2pt] at (.4,.2) {$f_j(r)=a_j(r)$};
   \node[left, outer sep=2pt] at (.3,-.15) {$\frac{1}{j}$};
   \node[left, outer sep=2pt] at (-.005,-.65) {$\frac{2}{j}$};
  \draw[dotted] (0,0) circle [radius=.75];
  \draw[dotted] (0,0) circle [radius=.5];
  \draw[fill] (0,0) circle [ radius=.025];
 \end{tikzpicture} 
  \caption{Depiction of a flat torus with a conformal factor $f_j(r)$ defined radially from $p$ which is different than $1$ on a ball of radius $\frac{2}{j}$ and strictly increasing on the annulus where $\frac{1}{j} < r < \frac{2}{j}$.}
 \end{figure}

\begin{table}[ht] \label{ExamplesTable}
\caption{\textbf{Examples Contrasting $L^p$, GH, and SWIF Convergence:} We see that when one has $L^p$ convergence for $p > m$ then we find H\"{o}lder control on distances and all three notions of convergence agree. When one has $L^m$ convergence then SWIF convergence agrees with $L^p$ convergence but GH convergence differs. When one has $L^p$ convergence for $p < m$ and an $L^m$ bound then one expects bubbling and when there is no $L^m$ bound then one should expect poor behavior of the sequence with respect to GH and SWIF convergence. In particular, the example sequence converges to a flat torus with an infinite cigar attached. See Figure 1.}
\begin{tabular}{c c c c c}  
\hline\hline                        
Cases & $a_j(r)=j^{\alpha}$ & $a_j(r)=\frac{j^{\eta}}{1+\ln(j)}, 0 < r < \frac{1}{j^{\eta}},
$  & $a_j(r) =j$ & $a_j(r)=j^{\eta}$ \\ 
& $ 0<\alpha < 1$ & $\frac{1}{r(1-\ln(r))}, \frac{1}{j^{\eta}} < r < \frac{1}{j}, \eta > 1$ & & $\eta > 1$\\[1ex]
\hline 
 &&&&\\
$L^p$ Converges & $p<\frac{m}{\alpha} > m$ & $p \le m$ & $p < m$,  & $p < \frac{m}{\eta} < m$  \\    
&&&$L^m$ is bounded&\\[1ex]

GH converges  & Yes & No, converges to a  & No, converges to a   & No, unbounded  \\
to flat tori? & & flat torus with a  &flat torus with a  & volume   \\ 
 & &  line attached & bubble attached &and diameter  \\ [1ex] 
SWIF converges  & Yes & Yes  & No, converges to a   & No, unbounded    \\ 
to flat tori?& & &flat torus with a  &volume \\
& & &bubble attached &and diameter \\
Prototypical  &H\"{o}lder & & &  \\ 
geometric & control & splines & bubbling & blowing up \\ 
phenomenon & on distances & & &  \\ [1ex] 

\hline     
\end{tabular}

\end{table}


The first example shows that the $C^0$ bound from below is necessary for proving convergence of $M_j$ to $M_0$ in Theorem \ref{MainThmConv}. One should note that this example can be generalized to a family of examples where the conformal factor converges pointwise, but not uniformly, along  a curve to a value which is lower than what the conformal factor converges to on the rest of the manifold. For this family of examples one will always find a cinched metric space in the limit where the curve produces a shortcut in the limiting metric space and the limiting metric space cannot be a Riemannian manifold.

 \begin{ex} \label{Cinched-Sphere}  
 Define a sequence of functions radially from the north pole on $\Sp^m$ by
 \be
 f_j(r)=
 \begin{cases}
 1 & r\in[0,\pi/2- 1/j]
 \\  h(jr-\pi/2) & r\in[\pi/2- 1/j, \pi/2+ 1/j]
 \\ 1 &r\in [\pi/2+ 1/j, \pi]
 \end{cases}
\ee
where $h:[-1,1]\rightarrow \R$ is a smooth even function such that 
$h(-1)=1$ with $h'(-1)=0$, 
decreasing to $h(0)=h_0\in (0,1)$ and then
increasing back up to $h(1)=1$, $h'(1)=0$. If one defines $M_j = (\Sp^m, f_j^2 g_{\Sp^m})$ we see that
\begin{align}
M_j &\VFto M_{\infty}
\\M_j &\mGHto M_{\infty}
\end{align}
but we can conclude that $M_{\infty}$ is not isometric to $\Sp^m$. Instead $M_{\infty} = (\Sp^m, f_{\infty}^2 g_{\Sp^m}) $ is the conformal metric with conformal factor
 \be
 f_{\infty}(r)=
 \begin{cases}
 h_0 & r=\pi/2
 \\  1 &\text{ otherwise}
 \end{cases},
 \ee  
 which defines a metric space which is not a Riemmanian manifold.
\end{ex}

In the next example we see a simple case where the hypotheses of Theorem \ref{MainThmConv} are satisfied and the conclusion holds.

\begin{ex}\label{L^p Conv}
Define a sequence of functions, radially defined from a point $p \in \Tor^m$, on $\Tor^m$ by 
\begin{equation}
f_j(r)=
\begin{cases}
j^{\alpha} &\text{ if } r \in [0,1/j]
\\h_j(jr) &\text{ if } r \in [1/j,2/j]
\\ 1 & \text{ if } r \in (1/j,\sqrt{2}\pi]
\end{cases}
\end{equation}
where $0< \alpha < 1$ and $h_j:[1,2] \rightarrow \R$ is a smooth, decreasing function so that $h_j(1) = j^{\alpha}$ and $h_j(2) = 1$. Then $\|f_j-1\|_{L^p}\rightarrow 0$, $p< \frac{m}{\alpha} $ and 
\begin{align}
M_j &\VFto \Tor^m
\\M_j &\GHto \Tor^m.
\end{align}
\end{ex}

The next example is not bounded in $L^p$ for any $p > m$ and does not converge in $L^m$ to $1$ and so does not fit the hypotheses of Theorem \ref{MainThmConv}. For this reason we will see that the Gromov-Hausdorff and Sormani-Wenger intrinsic flat limit are not the flat torus due to bubbling. Instead the limit is a flat torus with a bubble attached. This is an important example since it shows that when the Riemannian metric is bounded in $L^{\frac{m}{2}}$ one should expect the possibility of bubbling.

\begin{ex}\label{No L^m Conv}
Define a sequence of functions, radially defined from a point $p \in \Tor^m$, on $\Tor^m$ by
\begin{equation}
f_j(r)=
\begin{cases}
j &\text{ if } r \in [0,1/j]
\\h_j(jr) &\text{ if } r \in [1/j,2/j]
\\ 1 & \text{ if } r \in (2/j,\sqrt{m}\pi].
\end{cases}
\end{equation}
where $h_j:[1,2] \rightarrow \R$ is a smooth, decreasing function so that $h_j(1) = j$, $h_j'(1)=h_j'(2)=0$, and $h_j(2) = 1$ so that
\begin{align}
    \frac{1}{j^m}\int_1^2h_j(s)^m s^{m-1}ds \rightarrow 0.\label{ConstructionHyp}
\end{align}
Then $f_j$ is not bounded in $L^p$ norm for $p > m$ but does have bounded $L^m$ norm and volume.
Furthermore, for $M_j=(\Tor^m, f_j^2 g_{\Tor^m})$
\begin{align}
M_j &\Fto M_{\infty}
\\M_j &\GHto M_{\infty}
\end{align}
where $M_{\infty}$ is not isometric to $\Tor^m$. Instead 
\begin{align}
    M_{\infty} = \Tor^m \sqcup \mathbb{D}^m/\sim,
\end{align}
where we fix a $p \in \Tor^m$ and for $d \in \partial \mathbb{D}^m$  we have
\begin{align}
    p \sim d.
\end{align}
\end{ex}

In the next example $f_j\rightarrow 1$ in $L^m$ but the $L^p$ limit is unbounded for every $ p > m$ which is at the boundary of what is assumed in Theorem \ref{MainThmConv}.  For this reason we will see that the Sormani-Wenger intrinsic flat limit and the Gromov-Hausdorff limit disagree. This example, combined with the insight of Example 3.2 and 3.3, illustrates that the $\frac{m}{2}$ power for the $L^p$ convergence of Riemannian metrics is the correct power which distinguishes between Gromov-Hausdorff convergence and Sormani-Wenger intrinsic flat convergence.

\begin{ex}\label{VolControlDiamNotConvergent}
Define a sequence of functions, radially defined from a point $p \in \Tor^m$, on $\Tor^m$ by
\begin{equation}
f_j(r)=
\begin{cases}
\frac{j^{\eta}}{1+\ln(j)} &\text{ if } r \in [0,1/j^{\eta}]
\\ \frac{1}{r(1-\ln(r))} &\text{ if } r \in (1/j^{\eta},1/j]
\\ h_j(jr) &\text{ if } r \in (1/j,2/j]
\\ 1 & \text{ if } r \in (2/j,\sqrt{m}\pi].
\end{cases}
\end{equation}
where $\eta > 1$ and $h_j:[1,2]\rightarrow \R$ is a smooth, decreasing function so that $h_j(1) = \frac{j}{1+\ln(j)}$  and $h_j(2) = 1$. $f_j$ is unbounded in $L^p$, $p > m$, but
\begin{align}
    \|f_j-1\|_{L^p(\Tor^m)} \rightarrow 0 \text{ for } p \le m.
\end{align}
If one defined $M_j = (\Tor^m, f_j^2 g_{\Tor^m})$ then
\begin{align}
M_j &\mGHto M_{\infty},
\end{align}
where $M_{\infty}$ is $\Tor^m$ with a line of length $\ln(\eta)$ attached, and
\begin{align}
M_j &\VFto \Tor^m.
\end{align}
\end{ex}

In conclusion, we have seen that $C^0$ convergence from below is necessary to expect convergence to a Riemannian manifold in Theorem \ref{MainThmConv} and if one additionally assumes $L^{\frac{p}{2}}$, $p > m$ convergence then one expects Gromov-Hausdorff and Sormani-Wenger intrinsic flat convergence to a Riemannian manifold. If one assumes $L^{\frac{p}{2}}$, $p = m$ convergence then one only expects  Sormani-Wenger intrinsic flat convergence to a Riemannian manifold, as in Corollary 5.2 of \cite{Allen-Perales-Sormani}. In the case of Theorem 1.1 of \cite{Allen-Perales-Sormani} we see that convergence of volume is enough and we don't have to assume $L^{\frac{m}{2}}$ convergence of the metric. In the case of Theorem \ref{MainThmConv} of this paper, we could have also assumed $L^{\frac{p}{2}}$, $p > m$ convergence in norm instead of the $L^{\frac{p}{2}}$, $p > m$ bound with volume convergence.

\section{Proof of the Main Theorem}\label{sec:MainThmProof}

In this section we will use $L^{\frac{p}{2}}$ , $p > m$ bounds to prove Theorem \ref{MainThmEst} which will allow us to conclude with the proof of Theorem \ref{MainThmComp}, Theorem \ref{MainThmConv}, and Theorem \ref{TorusTheorem}. In order to control distances between points $q_1,q_2 \in M_j$ from above we will want to build a foliation of a region around the minimizing geodesic connecting $q_1, q_2 \in M_0$.

 \begin{defn}\label{SymFamilyofCurvesDef}
 Let $q_1,q_2 \in M_0$ and $\alpha(t)$ be a length minimizing geodesic joining $q_1$ to $q_2$, with respect to $M_0$, of length $L$. By extending a distance $\tau \in (0,\varepsilon)$ in radial directions orthogonal to $\alpha'(t)$, which can be parameterized over $\Sp^{m-2}$, one obtains a tubular neighborhood $\alpha \subset \bar{U}_{\varepsilon} \subset M$ with coordinates $(\tau,t,\vec{s}) \in [0,\varepsilon]\times [0,L]\times\Sp^{m-2}$. For fixed $(\tau,\vec{s}) \in  [0,\varepsilon] \times \Sp^{m-2}$ we define a curve connecting $q_1$ and $q_2$ in coordinates by
 \begin{align}
 \gamma(\tau,t,\vec{s}) = (\tau L\sin\left(\frac{\pi}{L}t\right),t,\vec{s}).
 \end{align} 
 We define the \textbf{symmetric family of curves joining $q_1$ to $q_2$ of width $\varepsilon$} to be the set $U_{\varepsilon}$ foliated by the curves $\gamma(\tau,t,\vec{s})$ for $(\tau,t,\vec{s}) \in (0,\varepsilon)\times [0,L]\times\Sp^{m-2}$.
 \end{defn} 
 
 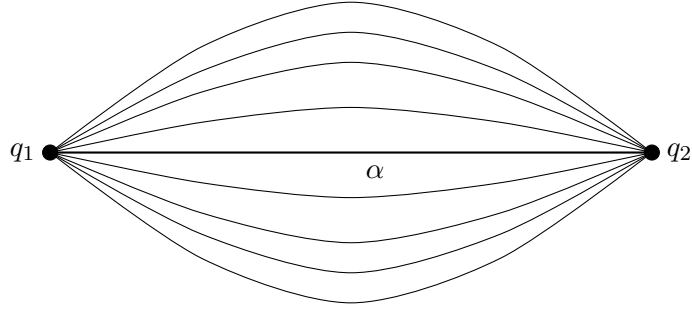
\begin{figure}
 \label{fig:SymmetricFamilyCurves}
 \begin{tikzpicture}[scale=4]
  \draw[thick] (-1,1) -- (1,1);
  \draw[fill] (-1,1) circle [radius=0.025];
  \node[left, outer sep=2pt] at (-1,1) {$q_1$};
  \draw[fill] (1,1) circle [radius=0.025];
  \node[right, outer sep=2pt] at (1,1) {$q_2$};
  \node[right, outer sep=2pt] at (0,1-1/16) {$\alpha$};
   \draw plot [smooth] coordinates {(-1,1)(-.5,1.1)(0,1.15)(.5,1.1)(1,1)};
  \draw plot [smooth] coordinates {(-1,1)(-.5,1-.1)(0,1-.15)(.5,1-.1)(1,1)};
  \draw plot [smooth] coordinates {(-1,1)(-.5,1.2)(0,1.3)(.5,1.2)(1,1)};
  \draw plot [smooth] coordinates {(-1,1)(-.5,1-.2)(0,1-.3)(.5,1-.2)(1,1)};
  \draw plot [smooth] coordinates {(-1,1)(-.5,1.27)(0,1.4)(.5,1.27)(1,1)};
  \draw plot [smooth] coordinates {(-1,1)(-.5,1-.27)(0,1-.4)(.5,1-.27)(1,1)};
  \draw plot [smooth] coordinates {(-1,1)(-.5,1.35)(0,1.5)(.5,1.35)(1,1)};
  \draw plot [smooth] coordinates {(-1,1)(-.5,1-.35)(0,1-.5)(.5,1-.35)(1,1)};
 \end{tikzpicture} 
  \caption{Symmetric family of curves joining $q_1$ to $q_2$ which foliates a region of full volume around $\alpha$. In general, one should note that $\alpha$ is the only geodesic in this family. Since the distance between $q_1$ and $q_2$ is achieved by $\alpha$, which is equal to the infimum of the lengths of the curves in the family, we are able to relate the distance between $q_1$ and $q_2$ to the volume of the foliated region which is the key to the estimate in Theorem \ref{MainThmEst}.}
 \end{figure}
 
 Now that we have this important construction defined we prove some properties about symmetric families of curves which will be used to prove our main H\"{o}lder estimate. We begin with an estimate of the size of the tubular neighborhoood. The author expects that the following result is well known but does not know of a reference in the literature and so has decided to prove the result.
 
 \begin{lem}\label{NorNeighRadiusEst}
 Let $M_0=(M,g_0)$ be a smooth Riemannian manifold defined on the closed manifold $M$. Let $K\in [0,\infty)$ be a bound on the absolute value of the sectional curvature of $M_0$ and $\inj(M_0)$ be the injectivity radius. If  $q_1,q_2\in M_0$ and $\alpha(t)$ a length minimizing geodesic joining $q_1$ to $q_2$, parameterized by arc length, then there exists a $\bar{\varepsilon}(K,\inj(M_0)) > 0$ so that for any $0 < \varepsilon < \bar{\varepsilon}$  a normal cylindrical neighborhood of width $\varepsilon$ exists, $\bar{U}_{\varepsilon}$. 
 \end{lem}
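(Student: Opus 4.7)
The plan is to construct $\bar{U}_\varepsilon$ as the image of the normal exponential map on the $\varepsilon$-disk bundle of $\alpha$, and to show that this map is a diffeomorphism for all $\varepsilon$ below an explicit threshold $\bar\varepsilon(K, I_0)$ where $I_0 = \inj(M_0)$. Writing the normal bundle as $\mathcal{N}\alpha := \{(t, v) : t \in [0, L],\ v \in T_{\alpha(t)}M,\ v \perp \alpha'(t)\}$, I would analyze $E : \mathcal{N}\alpha \to M$ defined by $E(t, v) = \exp_{\alpha(t)}(v)$ and show that on $\mathcal{N}^{\bar\varepsilon}\alpha := \{(t, v) : |v| < \bar\varepsilon\}$ it is a diffeomorphism onto its image. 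A polar decomposition $v = \tau \vec{s}$ with $\vec{s} \in \Sp^{m-2}$ a unit vector perpendicular to $\alpha'(t)$ then yields the cylindrical coordinates $(\tau, t, \vec{s})$ from Definition \ref{SymFamilyofCurvesDef}.

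First, I would fix $\bar\varepsilon := \tfrac{1}{4}\min(I_0,\, \pi/\sqrt{K})$ (reading $\pi/\sqrt{K}$ as $+\infty$ when $K = 0$) and establish that $dE$ has full rank on $\mathcal{N}^{\bar\varepsilon}\alpha$, which amounts to showing $\alpha$ has no focal points within distance $\bar\varepsilon$. For a unit vector field $w(t)$ perpendicular to $\alpha'(t)$ and parallel along $\alpha$ near $\alpha(t_0)$, the variation $\gamma_t(s) := \exp_{\alpha(t)}(s\, w(t))$ produces a Jacobi field $J(s) := \partial_t \gamma_t(s)|_{t = t_0}$ along $\gamma_{t_0}$ satisfying $J(0) = \alpha'(t_0)$ and $J'(0) = 0$. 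In a model space of constant curvature $K$ this Jacobi field equals $\cos(\sqrt{K} s)$ times the parallel transport of $\alpha'(t_0)$, whose first zero occurs at $s = \pi/(2\sqrt{K})$. Rauch's comparison theorem under $\sect \le K$ then shows the corresponding Jacobi field in $M_0$ cannot vanish earlier than $\pi/(2\sqrt{K}) > \bar\varepsilon$, yielding the local diffeomorphism property on $\mathcal{N}^{\bar\varepsilon}\alpha$.

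Next I would prove global injectivity of $E$ on $\mathcal{N}^{\bar\varepsilon}\alpha$. Suppose $E(t_1, v_1) = E(t_2, v_2) = p$ with $(t_i, v_i) \in \mathcal{N}^{\bar\varepsilon}\alpha$. Since $|v_i| < I_0$, the geodesic $s \mapsto \exp_{\alpha(t_i)}(s v_i)$ is the unique minimizing geodesic from $\alpha(t_i)$ to $p$, so $d(\alpha(t_i), p) = |v_i|$, and the triangle inequality combined with $\alpha$ being minimizing gives $|t_1 - t_2| = d(\alpha(t_1), \alpha(t_2)) \le |v_1| + |v_2| < 2 \bar\varepsilon < I_0$. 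If $t_1 = t_2$, injectivity of $\exp_{\alpha(t_1)}$ on the $I_0$-ball immediately forces $v_1 = v_2$. If $t_1 \neq t_2$, the geodesic triangle $\alpha(t_1)\alpha(t_2)p$ has two right angles at $\alpha(t_1)$ and $\alpha(t_2)$ (because $v_i \perp \alpha'(t_i)$), and I would rule this out via a Toponogov-type comparison: in a space with $|\sect| \le K$, two distinct perpendicular geodesics issuing from $\alpha$ cannot first meet within length $\pi/(2\sqrt{K}) > \bar\varepsilon$.

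The main technical obstacle is executing this last step cleanly to extract an explicit constant. A robust alternative is a ``first collision'' continuity argument: set $\varepsilon_\ast := \sup\{\varepsilon \le \bar\varepsilon : E|_{\mathcal{N}^\varepsilon \alpha} \text{ is injective}\}$ and observe that if $\varepsilon_\ast < \bar\varepsilon$ then by compactness a limiting pair of colliding initial data would emerge at radius $\varepsilon_\ast$; either the two preimages coincide (contradicting the local injectivity established in Step 2) or they remain distinct, in which case a focal point must form in the limit (contradicting Rauch since $\varepsilon_\ast < \pi/(2\sqrt{K})$). Either way $\varepsilon_\ast = \bar\varepsilon$. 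Once local diffeomorphism and global injectivity are both in hand, the inverse function theorem promotes $E|_{\mathcal{N}^{\bar\varepsilon}\alpha}$ to a diffeomorphism onto its open image, and for any $0 < \varepsilon < \bar\varepsilon$ the normal cylindrical neighborhood $\bar{U}_\varepsilon := E(\{(t, v) : |v| \le \varepsilon\})$ carries the coordinates $(\tau, t, \vec{s})$ as required.
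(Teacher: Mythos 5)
Your overall strategy mirrors the paper's: set up the normal exponential map, use a Rauch/Berger comparison with the constant-curvature model to bound the focal distance below by $\pi/(2\sqrt{K})$, and then run a compactness argument at the first radius where global injectivity fails. Your Jacobi field setup is correct (the relevant comparison for $J(0)\ne 0$, $J'(0)=0$ is really the Berger/Warner version of Rauch, which is what the paper cites via \cite{Berger} and \cite{Warner}), and the explicit constant $\bar\varepsilon=\tfrac14\min(I_0,\pi/\sqrt{K})$ is a clean refinement of what the paper leaves implicit.

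The gap is in the final dichotomy. In your ``first collision'' argument, when the limiting preimages remain \emph{distinct}, you assert that ``a focal point must form in the limit,'' but two distinct normal geodesics from $\alpha$ meeting at a point is not, by itself, a focal point: $dE$ may still be nondegenerate at both preimages. This is exactly the case the paper has to work for. When $t_1\ne t_2$ and $\exp_\alpha^\perp(\tau',t_1,\vec s_1)=\exp_\alpha^\perp(\tau',t_2,\vec s_2)=q_{12}$, the paper introduces a one-parameter family of normal geodesics $\beta_t$ for $t\in(t_1,t_2)$ and argues that either every $\beta_t$ also passes through $q_{12}$ --- in which case $q_{12}$ is a focal point and Warner's Theorem 3.2 applies --- or some $\beta_{t'}$ produces a collision at a radius smaller than $\tau'$, contradicting minimality. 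Your proposal collapses this whole argument into one unsupported sentence; you even flag the Toponogov step it replaces as the ``main technical obstacle,'' and the continuity argument you offer as an alternative has a hole at precisely the same spot. To close the proof you would need to supply the intermediate-geodesic sweeping argument (or an equivalent lemma controlling the focal radius of a geodesic segment) rather than asserting the focal point outright.
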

 \begin{proof}
 Our concern is that two normal geodesics emanating from $\alpha$ could intersect and we would like to show that the radial distance at which this can happen is estimated entirely by a bound on the sectional curvature and the injectivity radius. We note that the normal exponential map, $\expon_{\alpha}^{\perp}$, is a diffeomorphism on any neighborhood of zero in the normal bundle to $\alpha$ and hence there exists an $\bar{\varepsilon}> 0$ so that a tubular neighborhood around $\alpha$ exists for all $0 < \varepsilon < \bar{\varepsilon}$. Our goal is to show that $\bar{\varepsilon}$ can be chosen to depend on $K, \inj(M_0)$, and not the points $q_1, q_2$ or $\alpha$. 
 
  Let $\tau'$ be the smallest radial distance so that for $\tau > \tau'$ we have $\expon_{\alpha}^{\perp}$ is no longer a diffeomorphism. If  $\expon_{\alpha}^{\perp}$ is also not a diffeomorphism at $\tau'$ then let $(\tau',t_1,\vec{s}_1), (\tau',t_2,\vec{s}_2) \in (0,\varepsilon)\times [0,L]\times\Sp^{m-2}$ so that $\expon_{\alpha}^{\perp}(\tau',t_1,\vec{s}_1)=\expon_{\alpha}^{\perp}(\tau',t_2,\vec{s}_2)=q_{12} \in M$. If $t_1=t_2$ and $\vec{s}_1\not = \vec{s}_2$ then we can estimate that $\tau' \ge  \inj(M_0)$. If $t_1 \not = t_2$ then let $\beta_{t_1}(\tau)=\expon_{\alpha}^{\perp}(\tau,t_1,\vec{s}_1)$ and $\beta_{t_2}(\tau)=\expon_{\alpha}^{\perp}(\tau,t_2,\vec{s}_2)$. For each $t \in (t_1,t_2)$  we can choose a geodesic $\beta_t(\tau)$ which leaves $\alpha$ orthogonally and meets $\beta_{t_1}$ or $\beta_{t_2}$. If each $\beta_t$ meets $q_{12}$ then $q_{12}$ is a focal point and by Theorem 3.2 of \cite{Warner} we can estimate the occurrence of this focal point in terms of the constant curvature model spaces, as required by the lemma. Instead if there is some $t' \in (t_1,t_2)$ so that $\beta_t$ does not meet $q_{12}$ then this contradicts the assumption that $\tau'$ was the smallest such occurrence of this type. Hence we see that $\bar{\varepsilon}=\tau'$ is estimated in terms of the curvature an injectivity radius in this case.
  
If $\expon_{\alpha}^{\perp}$ is a diffeomorphism at $\tau'$ then consider $\tau_{1,n},\tau_{2,n} > \tau '$ and let $(\tau_{1,n},t_{1,n},\vec{s}_{1,n}), (\tau_{2,n},t_{2,n},\vec{s}_{2,n}) \in (0,\varepsilon)\times [0,L]\times\Sp^{m-2}$ so that $\expon_{\alpha}^{\perp}(\tau_{1,n},t_{1,n},\vec{s}_{1,n})=\expon_{\alpha}^{\perp}(\tau_{2,n},t_{2,n},\vec{s}_{2,n})=q_{n} \in M$ where $\tau_{1,n}\rightarrow \tau'$ and $\tau_{2,n}\rightarrow \tau'$ as $n \rightarrow \infty$. Since $M$, $[0,L]$, and $\Sp^{m-2}$ are compact there must be a subsequence $q_{n_k}$ so that $q_{n_k} \rightarrow q_{\infty}\in M$, $\vec{s}_{i,n_k}\rightarrow \vec{s}_{i,\infty}\in \Sp^{m-2}$, and $t_{i,n_k}\rightarrow t_{i,\infty}\in [0,L]$, $i=1,2$ as $k \rightarrow \infty$ so that $\expon_{\alpha}^{\perp}(\tau_{i,n_k},t_{i,n_k},\vec{s}_{i,n_k})\rightarrow \expon_{\alpha}^{\perp}(\tau',t_{i,\infty},\vec{s}_{i,\infty})=q_{\infty}$, $i=1,2$ as $k \rightarrow \infty$. Let $\beta_{t_{1,n}}(\tau)=\expon_{\alpha}^{\perp}(\tau,t_{1,n},\vec{s}_{1,n})$, $\beta_{t_{2,n}}(\tau)=\expon_{\alpha}^{\perp}(\tau,t_{2,n},\vec{s}_{2,n})$, and $\beta_{t_{i,\infty}}(\tau)=\expon_{\alpha}^{\perp}(\tau',t_{i,\infty},\vec{s}_{i,\infty})$. By construction we have that $\beta_{i,n_k}\rightarrow \beta_{i,\infty}$, $i=1,2$, normal geodesics emanating from $\alpha$ which meet at $q_{\infty}$. Then since the normal exponential map to $\alpha$ is a diffeomorphism at $\tau'$ we must have that $\beta_{t_{1,n}}(\tau)$ and $\beta_{t_{2,n}}(\tau)$ both converge to $\beta_{t_{1,\infty}}(\tau)=\beta_{t_{2,\infty}}(\tau)=:\beta_{\infty}(\tau)$ for $\tau \in [0, \tau']$ as $n \rightarrow \infty$. 
  
  Since $\expon_{\alpha}^{\perp}$ is a diffeomorphism at $\tau'$ but is not a diffeomorphism in a neigborhood of $(\tau',t_{\infty},\vec{s}_{\infty})$  there must exist a $v \in T_{(\tau',t_{\infty},\vec{s}_{\infty})}(T_{\alpha(t_{\infty}))}M)$ so that  $d\expon_{\alpha}^{\perp}(\tau',t_{\infty},\vec{s}_{\infty})[v]=0$. Otherwise, by the inverse function theorem $\expon_{\alpha}^{\perp}$ is a diffeomorphism in a neigborhood of $(\tau',t_{\infty},\vec{s}_{\infty})$. By the Gauss Lemma and the definition of $\expon_{\alpha}^{\perp}$  
 we see that $v \perp \partial_{\tau}$.  Hence we can choose a variation through geodesics normal to $\alpha$ whose variational vector field is a Jacobi field along $\beta_{\infty}(\tau)$ which vanishes at $q_{\infty}$. Hence, $q_{\infty}$ is a focal point and so again by Theorem 3.2 of \cite{Warner} we can estimate the occurrence of this focal point in terms of the constant curvature model spaces, as required by the lemma. Hence we see that $\bar{\varepsilon}=\tau'$ is estimated in terms of the curvature an injectivity radius in this case as well.
  
 \end{proof}

 \begin{lem}\label{MainToolsForProof}
 Let $M_0=(M,g_0)$ be a smooth Riemannian manifold defined on the smooth, connected, closed manifold $M$. Assume  $q_1,q_2\in M_0$ and $\alpha(t)$ a length minimizing geodesic joining $q_1$ to $q_2$, parameterized by arc length. then there exists an $\varepsilon >0$ so that a symmetric family of curves joining $q_1$ to $q_2$ of width $\varepsilon$ exists, $U_{\varepsilon}$. If $K \in [0,\infty)$, is a bound on the absolute value of the sectional curvature, $I_0=\inj(M_0)$ the injectivity radius, and $D_0=\diam(M_0)$ the diameter of $M_0$ then for $\bar{\varepsilon}=\bar{\varepsilon}(K, \inj(M_0))$ from Lemma \ref{NorNeighRadiusEst} we have that for $0<\varepsilon \le \frac{\bar{\varepsilon}}{\diam(M_0)} $ a symmetric family of curves joining $q_1$ to $q_2$ of width $\varepsilon$ exists, $U_{\varepsilon}$,  and with $\gamma'=\frac{d\gamma}{dt}$ we find
 \begin{align}
 |\gamma'(\tau,t,\vec{s})|_{g_0} \le C(K,I_0,D_0), \quad (\tau,t,\vec{s}) \in [0,\varepsilon]\times [0,L]\times \Sp^{m-2}.
\end{align}  
For $\pi:[0,\varepsilon]\times [0,L]\times \Sp^{m-2} \rightarrow [0,\varepsilon]\times \Sp^{m-2}$ given by $\pi(\tau,t,\vec{s})=(\tau,\vec{s})$ and $\bar{\gamma}^{-1}=\pi \circ\gamma^{-1} $ we find the normal jacobian of $\bar{\gamma}^{-1}$ to be
\begin{align}
NJ\bar{\gamma}^{-1}&=\left(L\sin\left(\frac{\pi}{L}t\right) \right)^{1-m}\sqrt{\det_{[\gamma']_{g}^{\perp}}(T)}.
\end{align}
for some map $T:U_{\varepsilon} \rightarrow [0,\varepsilon]\times \Sp^{m-2}$ where 
\begin{align}
\sqrt{\det_{[\gamma']_{g_0}^{\perp}}(T)}
&\le C(K,m, I_0, D_0).
\end{align}
 \end{lem}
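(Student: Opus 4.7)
The plan is to work entirely in Fermi coordinates around $\alpha$ and reduce every claim to a direct matrix computation combined with Jacobi-field comparison. First I would invoke Lemma \ref{NorNeighRadiusEst} to produce $\bar\varepsilon=\bar\varepsilon(K,I_0)$ for which a tubular neighborhood of $\alpha$ of width $\bar\varepsilon$ exists. Since the maximum radial displacement of $\gamma(\tau,t,\vec{s})$ from $\alpha$ is $\tau L \le \varepsilon L \le \varepsilon D_0$, the hypothesis $\varepsilon \le \bar\varepsilon/D_0$ forces $\gamma$ to remain inside this tubular neighborhood, so $\gamma$ is a well-defined embedding onto $U_\varepsilon$, which establishes existence of the symmetric family.

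In Fermi coordinates $(r,t,\vec{s})$ the Gauss lemma gives $g_{rr}=1$ and $g_{r\cdot}=0$, so $g_0$ is block diagonal between $\partial_r$ and the transverse $(m-1)$-dimensional subspace whose metric block I will denote $H$. Rauch comparison applied to the normal Jacobi fields emanating from $\alpha$ then yields uniform upper and lower bounds, depending only on $K$ and $\bar\varepsilon(K,I_0)$, on the component $g_{tt}$ and on the ratio $\sqrt{\det H}/r^{m-2}$ throughout $\{r \le \bar\varepsilon\}$; this is the key analytic input. Since $\gamma' = d\gamma(\partial_t) = \tau\pi\cos(\pi t/L)\,\partial_r + \partial_t$, the block structure of $g_0$ immediately gives
\[
|\gamma'|_{g_0}^2 = \bigl(\tau\pi\cos(\pi t/L)\bigr)^2 + g_{tt}\bigl(\tau L\sin(\pi t/L),t,\vec{s}\bigr),
\]
so both terms are controlled using $\tau \le \bar\varepsilon/D_0$ together with the Rauch bound on $g_{tt}$, producing $|\gamma'|_{g_0} \le C(K,I_0,D_0)$.

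For the normal Jacobian I would write $d\gamma$ as an $m\times m$ matrix in the $(\tau,t,\vec{s})\to(r,t,\vec{s})$ frame and compute $J_\gamma^2 = \det(d\gamma^T G_0\,d\gamma)$ by block expansion; the off-diagonal $\tau\pi\cos(\pi t/L)$ contributions cancel cleanly because of $g_{rr}=1$ and $g_{r\cdot}=0$, leaving
\[
J_\gamma = L\sin(\pi t/L)\sqrt{\det H}.
\]
Applying the coarea formula to $\bar\gamma^{-1}$ with the normal-disk product measure on $[0,\varepsilon]\times \Sp^{m-2}$ and fiber length element $|\gamma'|_{g_0}\,dt$, then substituting $r = \tau L\sin(\pi t/L)$, the factor $L\sin(\pi t/L)$ coming from $J_\gamma$ combines with the $r^{m-2}$ portion of $\sqrt{\det H}$ to produce $(L\sin(\pi t/L))^{m-1}$ in the denominator; the residual factor is precisely $\sqrt{\det_{[\gamma']_g^\perp}(T)} = |\gamma'|_{g_0}/\rho$, where $\rho = \sqrt{\det H}/r^{m-2}$ is the Rauch-bounded transverse density, so its bound by $C(K,m,I_0,D_0)$ follows directly from the previous paragraph.

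The principal obstacle is keeping the normalizations straight — specifically, choosing the product measure on the target $[0,\varepsilon]\times \Sp^{m-2}$ that cleanly extracts the prefactor $(L\sin(\pi t/L))^{1-m}$, and identifying the residual $\sqrt{\det_{[\gamma']_g^\perp}(T)}$ intrinsically, so that its boundedness reduces unambiguously to the Rauch comparison on the transverse block $H$ rather than to an implicit coordinate-dependent quantity.
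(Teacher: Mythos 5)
Your proposal is correct and follows the same overall strategy as the paper's proof (existence of $U_\varepsilon$ via Lemma~\ref{NorNeighRadiusEst} and the displacement bound $\tau L\sin(\pi t/L)\le\varepsilon D_0\le\bar\varepsilon$, Fermi coordinates, comparison estimates on Jacobi fields, and a coarea/Jacobian computation), but you route the normal-Jacobian estimate through a different computation, and the difference is worth noting. The paper constructs an explicit $g_0$-orthonormal basis $\bar e_2,\dots,\bar e_m$ of $[\gamma']_{g_0}^\perp$, pushes it forward by $d\bar\gamma^{-1}$, and bounds the entries of the resulting Gram matrix; the normalizing denominators of the $\bar e_i$ contain the $2\times 2$ minors $g_0(\partial_t,\partial_t)g_0(\partial_i,\partial_i)-g_0(\partial_t,\partial_i)^2$, so the paper must bound these from below, which it does by combining the Heintze--Karcher volume comparison with the Hadamard--Schwarz inequality. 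You instead invoke the abstract coarea identity $NJ\bar\gamma^{-1}\cdot J_\gamma=|\gamma'|_{g_0}$ and identify $\sqrt{\det_{[\gamma']_{g_0}^\perp}(T)}=|\gamma'|_{g_0}/\rho$ with $\rho$ the transverse volume density $\sqrt{\det H}/r^{m-2}$ (modulo the $\sqrt{\det\sigma}$ normalization you are careful to flag). This requires only a lower bound on the full $(m-1)$-determinant $\det H$ and dispenses with the Hadamard--Schwarz step entirely, which is a leaner route to the same bound. One imprecision: you credit the lower bound on $\sqrt{\det H}/r^{m-2}$ to ``Rauch comparison,'' but Rauch gives pointwise length bounds on individual Jacobi fields; a determinant lower bound is a volume comparison (Heintze--Karcher, or G\"{u}nther), needed because Jacobi fields of bounded length could in principle become nearly parallel and collapse $\det H$. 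With that correction your computation is sound, and your closing caveat about keeping the $\tau^{m-2}$ versus $r^{m-2}$ normalizations straight is exactly the place where a careless calculation would go wrong.
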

 \begin{proof}
 Let $\alpha(t)$ be a length minimizing geodesic joining $q_1, q_2 \in M$ of length $L$, parameterized by arc length, and let $U_{\varepsilon}$, $0 < \varepsilon \le \frac{\bar{\varepsilon}}{\diam(M_0)},$ be a symmetric family of curves joining $q_1$ and $q_2$, guaranteed to exist by Lemma \ref{NorNeighRadiusEst} and the fact that $\tau L\sin\left(\frac{\pi}{L}t\right) \le \tau L \le\frac{\bar{\varepsilon}}{\diam(M_0)} \diam(M_0)\le \bar{\varepsilon}$. $U_{\varepsilon}$ is foliated by the curves given in coordinates by
 \begin{align}
 \gamma(\tau,t,\vec{s}) = (\tau L\sin\left(\frac{\pi}{L}t\right),t,\vec{s}),
 \end{align}
 and hence
  \begin{align}
 \gamma'(\tau,t,\vec{s}) = (\tau \pi\cos\left(\frac{\pi}{L}t\right),1,0).
 \end{align}
 On the neighborhood of $\alpha$ given by $\bar{U}_{\varepsilon}$ the metric $g_0$ can be written in the coordinates $(\tau,t,\vec{s}) \in [0,\varepsilon]\times [0,L]\times \Sp^{m-2}$ as
 \begin{align}
 g_0=d\tau^2 +\bar{g}_{\tau},
 \end{align}
 where $\bar{g}_{\tau}$ is non-zero for vectors tangent to $[0,L]\times \Sp^{m-2}$ and zero otherwise.

 Now we write a constant curvature $K\in \R$ metric $g_K$ in terms of the coordinates on the neighborhood of $\alpha$ as 
 \begin{align}
\hat{g}_K= d\tau^2+\lambda_K(\tau)^2dt^2+\eta_K(\tau)^2\sigma_{m-2},
 \end{align}
 where $\sigma_{m-2}$ is the standard round metric on a $m-2$ dimensional sphere,
 \begin{align}
 \lambda_K(\tau)=
 \begin{cases}
 \cos(\sqrt{K}\tau)& K > 0
 \\ 1 & K=0
 \\ \cosh(\sqrt{|K|} \tau)& K < 0
 \end{cases}
 \end{align}
 and
  \begin{align}
  \eta_K(\tau)=
 \begin{cases}
  \frac{1}{\sqrt{K}}\sin(\sqrt{K}\tau)& K > 0
 \\ \tau & K=0
 \\ \frac{1}{\sqrt{|K|}}\sinh(\sqrt{|K|} \tau)& K < 0
 \end{cases}.
 \end{align}
 Now notice that $\partial_t=\partial_0$ is a normal Jacobi field for $g_0$ and there exists an orthonormal set $\partial_1,...,\partial_{m-2}\in T_{\vec{s}}\Sp^{m-2}$, with respect to $\sigma$, of normal Jacobi fields tangent to $\Sp^{m-2}$. Now notice that
 \begin{align}
 g_0(\partial_t,\partial_t)|_{\tau=0}&=1,\label{JacobiEq1}
\\ g_0(\nabla_{\partial_{\tau}}\partial_t,\nabla_{\partial_{\tau}}\partial_t)|_{\tau=0}&=0,\label{JacobiEq2}
 \\ g_0(\partial_i,\partial_i)|_{\tau=0}&=0, \quad 1 \le i \le m-2,\label{JacobiEq3}
 \\ g_0(\nabla_{\partial_{\tau}}\partial_i,\nabla_{\partial_{\tau}}\partial_i)|_{\tau=0}&=1, \quad 1 \le i \le m-2,\label{JacobiEq4}
 \end{align}
 where \eqref{JacobiEq1} follows since $\alpha(t)$ is a geodesic which is parameterized by arc length  and \eqref{JacobiEq3}, \eqref{JacobiEq4} follow from standard calculations for Riemannian metrics in polar coordinates. For \eqref{JacobiEq2} we calculate
 \begin{align}
  g_0(\nabla_{\partial \tau}\partial_t,\partial_t)|_{\tau=0}&= g_0(\nabla_{\partial t}\partial_{\tau},\partial_t)|_{\tau=0}
  \\&=  \partial_t(g_0(\partial_{\tau},\partial_t))|_{\tau=0}-g_0(\partial_t,\nabla_{\partial_t}\partial_t)|_{\tau=0}=0,
 \\0=\partial_{\tau}(g_0(\partial_t,\partial_{\tau}))|_{\tau=0}&= g_0(\nabla_{\partial \tau}\partial_t,\partial_{\tau})|_{\tau=0}+g_0(\partial_t,\nabla_{\partial \tau}\partial_{\tau})|_{\tau=0}
 \\&= g_0(\nabla_{\partial \tau}\partial_t,\partial_{\tau})|_{\tau=0},
 \end{align}
where we note that when $\tau=0$ the vector field $\partial_{\tau}$ spans the normal space to $\alpha(t)$. 
 
 Hence we can apply the extension of the Rauch Comparison Theorem  given by Berger \cite{Berger} and Warner \cite{Warner} with $K,k \in \R$, $K \le k$ upper and lower bounds on the sectional curvature of $M_0$, to conclude 
 \begin{align}
\lambda_k(\tau)^2 \le  g_0(\partial_t,\partial_t) &\le \lambda_K(\tau)^2 = \hat{g}_{K}(\partial_t,\partial_t)\label{LengthEst1}
  \\\eta_k(\tau)^2\le g_0(\partial_i,\partial_i)& \le \eta_K(\tau)^2 =\hat{g}_{K}(\partial_i,\partial_i), \quad 1 \le i \le m-2.\label{LengthEst2}
 \end{align} 
 
By the comparison theorem of Heintze and Karcher \cite{HeintzeKarcher} we find
\begin{align}
\lambda_k(\tau)^{m-2}\eta_k(\tau)=|\partial_{1}\wedge ... \wedge \partial_{m-2}\wedge \partial_t|_{\hat{g}_k} \le |\partial_{1}\wedge ... \wedge \partial_{m-2}\wedge \partial_t|_{g_0},
\end{align}
and now by applying the Hadamard-Schwarz inequality \cite{IKK} we find (a hat over a vector implies it is missing from the calculation)
\begin{align}
|\partial_{1}\wedge ... \wedge \partial_{m-2}\wedge \partial_t|_{g_0} &\le C_m |\partial_{1}\wedge ... \wedge \hat{\partial}_{i} \wedge... \wedge \partial_{m-2}|_{g_0}| \partial_{i}\wedge \partial_t|_{g_0}
\\&\le C_m |\partial_{1}|_{g_0} ... | \hat{\partial}_{i}|_{g_0} ... | \partial_{m-2}|_{g_0}| \partial_{i}\wedge \partial_t|_{g_0}
\\&\le C_m\lambda_{K}(\tau)^{m-3} \sqrt{g_0(\partial_t,\partial_t)g_0(\partial_i,\partial_i)-g_0(\partial_t,\partial_i)^2},
\end{align}
which implies for $\varepsilon < \bar{\varepsilon}$ the lower bound
\begin{align}
\sqrt{g_0(\partial_t,\partial_t)g_0(\partial_i,\partial_i)-g_0(\partial_t,\partial_i)^2 }\ge \frac{\lambda_k(\tau)^{m-2}\eta_k(\tau)}{C_m\lambda_{K}(\tau)^{m-3}} \ge C(K,k,m)\tau \label{AngleEst}
\end{align}

Additionally, we can estimate for $\varepsilon < \bar{\varepsilon}$ 
 \begin{align}
 |\gamma'(\vec{s},\tau,t)|_{g_0} &= \sqrt{g_0(\partial_{\tau},\partial_{\tau})d\tau(\gamma')^2+g_0(\partial_t,\partial_t)dt(\gamma')^2}
 \\&\le \sqrt{ d\tau(\gamma')^2+\lambda_K(\tau)^2dt(\gamma')^2}
 \\&=\sqrt{\tau^2 \pi^2 \cos^2\left(\frac{\pi}{L}t\right)+ \cosh^2(\sqrt{|K|}\tau)}
 \\&\le \sqrt{\varepsilon^2 \pi^2 + \cosh^2(\sqrt{|K|}\varepsilon)} \le C(k,K, I_0,D_0),
 \end{align}
 where we use the uniform upper bound on $\varepsilon$ for the last inequality.

Furthermore, if we define the set $X_{\varepsilon}=[0,\varepsilon]\times [0,L]\times\Sp^{m-2}$ and look at the map
\begin{align}
\gamma&: (X_{\varepsilon}, \hat{g}_0)\rightarrow (U_{\varepsilon},g_0),\quad \gamma(\tau,t,\vec{s}) = (\tau L\sin\left(\frac{\pi}{L}t\right),t,\vec{s}),
\end{align}
then we find

\begin{align}
d\gamma=
\begin{bmatrix}
L\sin\left(\frac{\pi}{L}t\right) & \tau \pi \cos\left(\frac{\pi}{L}t\right) & 0 \\
0 & 1 & 0 \\
0 & 0 &  L\sin\left(\frac{\pi}{L}t\right)I
\end{bmatrix},
\end{align}
as well as,
\begin{align}
d\gamma^{-1}=
\begin{bmatrix}
\frac{1}{L\sin\left(\frac{\pi}{L}t\right)} & -\frac{\tau \pi \cos\left(\frac{\pi}{L}t\right)}{L\sin\left(\frac{\pi}{L}t\right)} & 0 \\
0 & 1 & 0 \\
0 & 0 & \frac{1}{ L\sin\left(\frac{\pi}{L}t\right)}I
\end{bmatrix}.
\end{align}
Now if we let $W_{\varepsilon} = [0,\varepsilon]\times\Sp^{m-2}$ and define
\begin{align}
\bar{\gamma}^{-1}&:  (U_{\varepsilon},g_0=d\tau^2+\bar{g}_{\tau})\rightarrow(W_{\varepsilon},\hat{g}_0|_{W_{\varepsilon}}=d\tau^2+\tau^2\sigma),
\end{align}
then we know that 
\begin{align}
d\bar{\gamma}^{-1}=
\begin{bmatrix}
\frac{1}{L\sin\left(\frac{\pi}{L}t\right)} & -\frac{\tau \pi \cos\left(\frac{\pi}{L}t\right)}{L\sin\left(\frac{\pi}{L}t\right)} & 0 \\
0 & 0 & \frac{1}{ L\sin\left(\frac{\pi}{L}t\right)}I
\end{bmatrix}.
\end{align}

We are in particular interested in the subspace which is $g_0$ orthogonal to $\gamma'$, $W=[\gamma']_{g_0}^{\perp}$, which if $e_1=(\tau \pi \cos\left(\frac{\pi}{L}t\right),1,0,...,0)$ then the space is spanned by
\begin{align}
e_2&= (g_0(\partial_t,\partial_t), -\tau \pi \cos\left(\frac{\pi}{L}t\right),0,...,0) 
\\e_i&= (0,-g_0(\partial_t,\partial_i),0,...,g_0(\partial_t,\partial_t),0,...,0), \quad 3 \le i \le m,
\end{align}
where if we define $\bar{e}_i = \frac{e_i}{\|e_i\|_{g_0}}$, $1 \le i \le m$ then $\{\bar{e}_1,...,\bar{e}_m\}$ is a $g_0$ orthonormal basis where 
\begin{align}
\bar{e}_1&=\frac{(\tau \pi \cos\left(\frac{\pi}{L}t\right),1,0,...,0)}{\sqrt{\tau^2 \pi^2\cos^2\left(\frac{\pi}{L}t\right)+g_0(\partial_t,\partial_t)}} 
\\\bar{e}_2&= \frac{(g_0(\partial_t,\partial_t), -\tau \pi \cos\left(\frac{\pi}{L}t\right),0,...,0)}{\sqrt{g_0(\partial_t,\partial_t)}\sqrt{\tau^2 \pi^2\cos^2\left(\frac{\pi}{L}t\right)+g_0(\partial_t,\partial_t)}}
\\\bar{e}_i&= \frac{(0,-g_0(\partial_t,\partial_i),0,...,g_0(\partial_t,\partial_t),0,...,0)}{\sqrt{g_0(\partial_t,\partial_t)(g_0(\partial_t,\partial_t)g_0(\partial_i,\partial_i)-g_0(\partial_t,\partial_i)^2)}}, \quad 3 \le i \le m.
\end{align} 

We can also define $\bar{f}_i=d\bar{\gamma}^{-1}(\bar{e}_i)$ for $2 \le i \le m$ where
\begin{align} 
\bar{f}_2&= \frac{(g_0(\partial_t,\partial_t)+ \tau^2 \pi^2 \cos^2\left(\frac{\pi}{L}t\right),0,...,0)}{L\sin\left(\frac{\pi}{L}t\right)\sqrt{g_0(\partial_t,\partial_t)}\sqrt{\tau^2 \pi^2\cos^2\left(\frac{\pi}{L}t\right)+g_0(\partial_t,\partial_t)}}
\\\bar{f}_i&= \frac{(\tau \pi\cos\left(\frac{\pi}{L}t\right) g_0(\partial_t,\partial_i),0,...,g_0(\partial_t,\partial_t),0,...,0)}{ L\sin\left(\frac{\pi}{L}t\right)\sqrt{g_0(\partial_t,\partial_t)(g_0(\partial_t,\partial_t)g_0(\partial_i,\partial_i)-g_0(\partial_t,\partial_i)^2)}}, 
\\& \qquad 3 \le i \le m.
\end{align}
So we can calculate the inner products where $3 \le i,j \le m$
\begin{align}
\hat{g}&_0|_{W_{\varepsilon}}(\bar{f}_i,\bar{f}_j)
\\&=\frac{\tau^2( \pi^2\cos^2\left(\frac{\pi}{L}t\right) g_0(\partial_t,\partial_i)g_0(\partial_t,\partial_j)+g_0(\partial_t,\partial_t)^2)}{L^2\sin^2\left(\frac{\pi}{L}t\right)g_0(\partial_t,\partial_t)\displaystyle\prod_{k=i,j}\sqrt{g_0(\partial_t,\partial_t)g_0(\partial_k,\partial_k)-g_0(\partial_t,\partial_k)^2}},
\\\hat{g}&_0|_{W_{\varepsilon}}(\bar{f}_2,\bar{f}_i)
\\&= \frac{\tau\sqrt{g_0(\partial_t,\partial_t)+ \tau^2 \pi^2 \cos^2\left(\frac{\pi}{L}t\right)} \pi\cos\left(\frac{\pi}{L}t\right) g_0(\partial_t,\partial_i)}{L^2\sin^2\left(\frac{\pi}{L}t\right)g_0(\partial_t,\partial_t)\sqrt{g_0(\partial_t,\partial_t)g_0(\partial_i,\partial_i)-g_0(\partial_t,\partial_i)^2}},
\\\hat{g}&_0|_{W_{\varepsilon}}(\bar{f}_2,\bar{f}_2)
\\&=\frac{\left(g_0(\partial_t,\partial_t)+ \tau^2 \pi^2 \cos^2\left(\frac{\pi}{L}t\right)\right)^2}{L^2\sin^2\left(\frac{\pi}{L}t\right)g_0(\partial_t,\partial_t)\left(\tau^2 \pi^2\cos^2\left(\frac{\pi}{L}t\right)+g_0(\partial_t,\partial_t)\right)}
\\&=\frac{g_0(\partial_t,\partial_t)+ \tau^2 \pi^2 \cos^2\left(\frac{\pi}{L}t\right)}{g_0(\partial_t,\partial_t)L^2\sin^2\left(\frac{\pi}{L}t\right)}.
\end{align}
By \eqref{LengthEst1}, \eqref{LengthEst2}, and \eqref{AngleEst} we see that for $ 3 \le i,j \le m,$
\begin{align}
\hat{g}_0|_{W_{\varepsilon}}(L\sin\left(\frac{\pi}{L}t\right)\bar{f}_{2},L\sin\left(\frac{\pi}{L}t\right)\bar{f}_{2}) &\le C(k,K,m,I_0,D_0),
\\\hat{g}_0|_{W_{\varepsilon}}(L\sin\left(\frac{\pi}{L}t\right)\bar{f}_{2},L\sin\left(\frac{\pi}{L}t\right)\bar{f}_{i}) &\le C(k,K,m,I_0,D_0),
\\\hat{g}_0|_{W_{\varepsilon}}(L\sin\left(\frac{\pi}{L}t\right)\bar{f}_{i},L\sin\left(\frac{\pi}{L}t\right)\bar{f}_{j}) &\le C(k,K,m,I_0,D_0),
\end{align}

Then we can estimate the normal jacobian of $\bar{\gamma}^{-1}$ to be
\begin{align}
NJ\bar{\gamma}^{-1}&=\sqrt{\det\left(\hat{g}_0|_{W_{\varepsilon}}(\bar{f}_{\alpha},\bar{f}_{\beta})_{2 \le \alpha,\beta \le m}\right)}
\\&=\left(L\sin\left(\frac{\pi}{L}t\right) \right)^{1-m}
\\&\quad \cdot \sqrt{\det\left(\hat{g}_0|_{W_{\varepsilon}}\left(L\sin\left(\frac{\pi}{L}t\right)\bar{f}_{\alpha},L\sin\left(\frac{\pi}{L}t\right)\bar{f}_{\beta}\right)_{2 \le \alpha,\beta \le m}\right)}
\\&= \left(L\sin\left(\frac{\pi}{L}t\right) \right)^{1-m}\sqrt{\det_{[\gamma']_{g_0}^{\perp}}(T)}
\\&\le \left(L\sin\left(\frac{\pi}{L}t\right) \right)^{1-m}C(k,K,m,I_0,D_0),
\end{align}
where we define
\begin{align}
\sqrt{\det_{[\gamma']_{g_0}^{\perp}}(T)}=\sqrt{\det\left(\hat{g}_0|_{W_{\varepsilon}}\left(L\sin\left(\frac{\pi}{L}t\right)\bar{f}_{\alpha},L\sin\left(\frac{\pi}{L}t\right)\bar{f}_{\beta}\right)_{2 \le \alpha, \beta\le m}\right)}.
\end{align}

 \end{proof}

 We now use the symmetric family of curves combined with Lemma \ref{MainToolsForProof} to obtain H\"{o}lder control on distances from above.

\begin{thm}\label{ConfDistBoundAbove}
Let $g_0$ be a smooth Riemannian metric and $g_1$ a continuous Riemannian metric defined on the smooth, connected, closed  manifold $M$. If
\begin{align}
\|g_1\|_{L_{g_0}^{\frac{p}{2}}(M)} \le C,\quad p > m,
\end{align}
then we can bound the distance between $q_1,q_2$, with respect to $g_1$, by
\begin{align}
d_{g_1}(q_1,q_2) \le C'(K,m,p,I_0,D_0,C) d_{g_0}(q_1,q_2)^{\frac{p-m}{p}} 
\end{align}
where $K\in [0,\infty)$ is a bound on the absolute value of the sectional curvature of $M_0$, $\inj(M,g_0)$ is the injectivity radius of $M_0$, and $D_0=\diam(M_0)$ is the diameter of $M_0$.
\end{thm}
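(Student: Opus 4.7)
The plan is to bound $d_{g_1}(q_1,q_2)$ by averaging the $g_1$-lengths of the curves in the symmetric family $U_\varepsilon$ from Definition \ref{SymFamilyofCurvesDef}. Let $\alpha$ be a $g_0$-minimizing geodesic of length $L = d_{g_0}(q_1,q_2)$, fix $\varepsilon = \bar\varepsilon/D_0$ with $\bar\varepsilon$ from Lemma \ref{NorNeighRadiusEst}, and consider the curves $\gamma(\tau,\cdot,\vec{s})$ indexed by $(\tau,\vec{s}) \in [0,\varepsilon] \times \Sp^{m-2}$. Since each such curve connects $q_1$ to $q_2$, Lemma \ref{NormComparisonLemma} together with the velocity bound $|\gamma'|_{g_0} \le C(K,I_0,D_0)$ from Lemma \ref{MainToolsForProof} gives the pointwise inequality
\begin{align*}
d_{g_1}(q_1,q_2) \le \int_0^L |\gamma'|_{g_1}\, dt \le \int_0^L |g_1|_{g_0}^{1/2}(\gamma)\,|\gamma'|_{g_0}\, dt
\end{align*}
for every $(\tau,\vec{s})$.

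Integrating this inequality over $(\tau,\vec{s}) \in [0,\varepsilon] \times \Sp^{m-2}$ with the standard product measure and applying the coarea formula to the submersion $\bar\gamma^{-1}\colon U_\varepsilon \to [0,\varepsilon] \times \Sp^{m-2}$, whose fibers are exactly the curves $\gamma(\tau,\cdot,\vec{s})$ with $g_0$-arc length $|\gamma'|_{g_0}\,dt$, converts the right-hand side into a single integral over the foliated region $U_\varepsilon$:
\begin{align*}
\varepsilon\,\omega_{m-2}\,d_{g_1}(q_1,q_2) \le \int_{U_\varepsilon} |g_1|_{g_0}^{1/2}\,NJ\bar\gamma^{-1}\, dV_{g_0} \le \|g_1\|_{L^{p/2}_{g_0}(M)}^{1/2}\,\|NJ\bar\gamma^{-1}\|_{L^{p'}(U_\varepsilon)},
\end{align*}
where $p' = p/(p-1)$ and the last step is H\"older's inequality applied with exponents $p,p'$ (the first factor is bounded by hypothesis).

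The heart of the proof is estimating $\|NJ\bar\gamma^{-1}\|_{L^{p'}(U_\varepsilon)}$. Substituting the formula $NJ\bar\gamma^{-1} = (L\sin(\pi t/L))^{1-m}\sqrt{\det_{[\gamma']^\perp}T}$ from Lemma \ref{MainToolsForProof} (with $\sqrt{\det T}$ bounded by a constant depending on $K, m, I_0, D_0$), switching to tube coordinates $(\rho,t,\vec{s})$ in which $\rho \in [0,\varepsilon L\sin(\pi t/L)]$ and $\sqrt{\det g_0} \le C\rho^{m-2}$, the inner $\rho$-integral produces $(\varepsilon L\sin(\pi t/L))^{m-1}$ up to constants, and after the substitution $s = t/L$ one arrives at
\begin{align*}
\|NJ\bar\gamma^{-1}\|_{L^{p'}(U_\varepsilon)}^{p'} \le C\,\varepsilon^{m-1}\,L^{\,1 - (m-1)/(p-1)}\int_0^1 \sin^{-(m-1)/(p-1)}(\pi s)\, ds.
\end{align*}
The key observation is that this last integral converges exactly when $(m-1)/(p-1) < 1$, i.e.\ when $p > m$; this is the only place the hypothesis enters essentially. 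Collecting exponents yields $\|NJ\bar\gamma^{-1}\|_{L^{p'}} \le C\,\varepsilon^{(m-1)/p'}\,L^{(p-m)/p}$.

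Finally, fixing $\varepsilon = \bar\varepsilon/D_0$ makes the prefactor $\varepsilon^{(m-1)/p'-1}$ a constant depending only on $K,m,p,I_0,D_0$, and combining the pieces gives
\begin{align*}
d_{g_1}(q_1,q_2) \le \frac{C\,\|g_1\|_{L^{p/2}_{g_0}(M)}^{1/2}}{\varepsilon\,\omega_{m-2}}\,\varepsilon^{(m-1)/p'}\,L^{(p-m)/p} = C'(K,m,p,I_0,D_0,C)\,d_{g_0}(q_1,q_2)^{(p-m)/p},
\end{align*}
as required. The main obstacle is controlling the singular endpoint behavior of $NJ\bar\gamma^{-1}$, where the foliating curves collapse back onto $q_1, q_2$; this singularity is precisely what dictates the $p > m$ threshold and the resulting H\"older exponent, and it is neatly tamed by the explicit jacobian formula from Lemma \ref{MainToolsForProof}.
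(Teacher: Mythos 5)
Your argument is correct and follows essentially the same strategy as the paper: average the $g_1$-length bound over the symmetric family of curves, change variables via the normal Jacobian $NJ\bar\gamma^{-1}$ from Lemma~\ref{MainToolsForProof}, apply H\"older's inequality to split off $\|g_1\|_{L^{p/2}_{g_0}}$, and observe that the remaining $\sin^{-(m-1)/(p-1)}$ integral converges precisely when $p>m$, yielding the H\"older exponent $(p-m)/p$ after fixing $\varepsilon=\bar\varepsilon/D_0$. One minor bookkeeping slip: the measure on $W_\varepsilon$ compatible with the coarea formula and $NJ\bar\gamma^{-1}$ is the polar measure $\tau^{m-2}d\tau\,dV_{\sigma}$, so the left-hand side should read $\vol(W_\varepsilon)\,d_{g_1}(q_1,q_2)$ with $\vol(W_\varepsilon)\sim\varepsilon^{m-1}$ rather than $\varepsilon\,\omega_{m-2}\,d_{g_1}(q_1,q_2)$, but since $\varepsilon$ is fixed at the end this is absorbed into $C'$ and does not affect the result.
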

\begin{proof}
Let $U_{\varepsilon}$ be a symmetric family of curves joining $q_1$ to $q_2$ of width $\varepsilon$ with coordinates $(\tau,t,\vec{s}) \in (0,\varepsilon)\times [0,L]\times\Sp^{m-2}$ where the metric can be written as $g_0=d\tau^2+\bar{g}_0$ and let $W_{\varepsilon} = [0,\varepsilon]\times\Sp^{m-2}$. 

By using the minimizing properties of geodesics we find
\begin{align}
\vol(W_{\varepsilon}) d_{g_1}(p,q) &\le \vol(W_{\varepsilon}) \lp\min_{(\vec{s},\tau)\in W_{\varepsilon}} L_{g_1}(\gamma(\vec{s},\tau,\cdot))\rp 
\\&\le \int_{W_{\epsilon}} L_{g_1}(\gamma(\vec{s},\tau,\cdot))\tau^{m-2}dV_{\sigma_{m-2}}d\tau, \label{WEpsilonStart}
\end{align}
where $\vol(W_{\varepsilon})=\omega_{m-2}\varepsilon^{m-1}$ is the volume in polar coordinates for Euclidean space where $\omega_{m-2}$ is the volume of the standard unit $m-2$ dimensional sphere.
By rewriting the length, using Lemma \ref{NormComparisonLemma}, and then using the coarea formula in \eqref{WEpsilonStart} we find
\begin{align}
\vol(W_{\varepsilon})& d_{g_1}(q_1,q_2)
\\&\le \int_{W_{\varepsilon}} \int_0^L \sqrt{g_1(\gamma',\gamma')} dt \tau^{m-2}dV_{\sigma_{m-2}}d\tau
\\&\le \int_{W_{\varepsilon}} \int_0^L |g_1|_{g_0}^{\frac{1}{2}}|\gamma'|_{g_0} dt \tau^{m-2}dV_{\sigma_{m-2}}d\tau
\\&=  L^{1-m} \int_{U_{\varepsilon}} |g_1|_{g_0}^{\frac{1}{2}}\lp\sin\lp\frac{\pi}{L}t\rp\rp^{1-m}\sqrt{\det_{[\gamma']_{g_0}^{\perp}}(T ) }dV_{g_0}, \label{WEpsilonMiddle}
\end{align}
where $\det_{[\gamma']_{g_0}^{\perp}}(T)$ is defined in Lemma \ref{MainToolsForProof}. 

By moving $\vol(W_{\varepsilon})$ to the other side and using Holder's inequality with $p>1$, $q = \frac{p}{p-1}$
\begin{align}
 d_{g_1}&(q_1,q_2)
 \\&\le \vol(W_{\varepsilon})^{-1}L^{1-m}  \lp\int_{U_{\varepsilon}} |g_1|_{g_0}^{\frac{p}{2}}dV_{g_0}\rp^{1/p}
\\&\cdot 
 \lp \int_{U_{\varepsilon}}\sin\lp\frac{\pi}{L}t \rp^{-\frac{p(m-1)}{p-1}} \left(\sqrt{\det_{[\gamma']_{g_0}^{\perp}}(T )}\right)^{\frac{p}{p-1}}dV_{g_0} \rp^{\frac{p-1}{p}}.\label{WEpsilonEnd}
\end{align}

By Lemma \ref{MainToolsForProof}
\begin{align}
|\gamma'(\vec{s},s,t)|_{g_0} \le C(K,I_0, D_0)\text{ for all } (\vec{s},s,t)\in \Sp^{m-2}\times (0,\epsilon)\times[0,L].\label{gammaBound}
\end{align}

By using the coarea formula and \eqref{gammaBound} we find
\begin{align}
&\lp\int_{U_{\varepsilon}}\sin\lp\frac{\pi}{L}t\rp^{-\frac{p(m-1)}{p-1}} \left(\sqrt{\det_{[\gamma']_{g_0}^{\perp}}(T )}\right)^{\frac{p}{p-1}}dV_{g_0}\rp ^{\frac{p-1}{p}}
\\& = \lp \int_0^L\int_0^{\varepsilon} \int_{\Sp^{m-2}}I\tau^{m-2}\left(\sqrt{\det_{[\gamma']_{g_0}^{\perp}}(T )}\right)^{\frac{1}{p-1}}dV_{\sigma_{m-2}} d \tau |\gamma'|_{g_0}dt\rp^{\frac{p-1}{p}},
\\ I&=L^{m-1} \sin\lp\frac{\pi}{L}t\rp^{-\frac{p(m-1)}{p-1}+m-1},
\\&\le \bar{C}L^{\frac{(m-1)(p-1)}{p}} \lp\int_0^L\sin\lp\frac{\pi}{L}t\rp^{-\frac{p(m-1)}{p-1}+m-1}  dt\rp^{\frac{p-1}{p}} \label{TrigBound},
\end{align}
where
\begin{align}
\bar{C}(K,m,p, I_0, D_0)
=\lp  \frac{\bar{\varepsilon}^{m-1}}{m-1} \omega_{m-2} C^2 \rp^{\frac{p-1}{p}}.
\end{align}

Now if we reconcile \eqref{WEpsilonEnd} and \eqref{TrigBound} we find
\begin{align}
d_{g_1}(q_1,q_2) &\le   \vol(W_{\epsilon})^{-1}\bar{C}(K,m,p, I_0,D_0) L^{\frac{(m-1)(p-1)}{p}+1-m}
\\&\quad \cdot \lp\int_{U_{\varepsilon}} |g_1|_{g_0}^{\frac{p}{2}} dV_{g_0}\rp^{1/p} 
\lp\int_0^L\sin\lp\frac{\pi}{L}t\rp^{-\frac{m-1}{p-1}}  dt\rp^{\frac{p-1}{p}}\label{ReconciledEq}
\end{align}

So if we choose $p > m$ then 
\begin{align}
-\frac{p(m-1)}{p-1}+m-1  = \frac{(m-1)(p-1) - p(m-1)}{p-1} = -\frac{m-1}{p-1} > -1
\end{align} 
and hence the integral of the power of $\sin$ is integrable,
\begin{align}
&\lp\int_0^L \sin\lp\frac{\pi}{L}t\rp ^{-\frac{p(m-1)}{p-1}+m-1}  dt\rp^{\frac{p-1}{p}} 
\\& =L^{\frac{p-1}{p}}\lp\int_0^1 \sin\lp\pi s\rp ^{-\frac{p(m-1)}{p-1}+m-1}  ds\rp^{\frac{p-1}{p}}\le L^{\frac{p-1}{p}}\tilde{C}_{p}.\label{TrigIntegrable}
\end{align}

Furthermore,  we know that
\begin{align}
&\frac{p-1}{p}+\frac{(m-1)(p-1)}{p}+1-m
\\&= \frac{p-1+(m-1)(p-1)+(1-m)p}{p}=\frac{p-m}{p},\label{FinalPower}
\end{align}
and if we choose $\varepsilon' = \frac{\bar{\varepsilon}}{\diam(M_0)} $  then since $\bar{\varepsilon}$ cannot be arbitrarily small by Lemma \ref{NorNeighRadiusEst} we find
\begin{align}\label{WVolumeEst}
\vol(W_{\epsilon'})^{-1}= \omega_{m-2}^{-1} \bar{\varepsilon}^{1-m} \diam(M_0)^{m-1}   \le C(K,I_0,D_0).
\end{align}
Putting \eqref{ReconciledEq}, \eqref{TrigIntegrable}, \eqref{FinalPower}, and \eqref{WVolumeEst} together we find 
\begin{align}
d_{g_1}(q_1,q_2) \le L^{\frac{p-m}{p}}\tilde{C}(K,m,p,I_0,D_0) \lp\int_{U_{\varepsilon'}} |g_1|_{g_0}^{\frac{p}{2}}dV_{g_0}\rp^{1/p}.\label{OneFromTheEnd}
\end{align}

Then by substituting $L = d_{g_0}(q_1,q_2)$ in \eqref{OneFromTheEnd} and using the fact that
\begin{align}
\lp\int_{U_{\varepsilon}} |g_1|_{g_0}^{\frac{p}{2}}dV_{g_0}\rp^{1/p} \le \lp\int_{M} |g_1|_{g_0}^{\frac{p}{2}}dV_{g_0}\rp^{1/p}
\end{align}
we find the desired bound on distance.
\end{proof}

We now finish with the proofs of the other two main theorems.

\begin{proof}[Proof of Theorem \ref{MainThmComp}]
By Theorem \ref{MainThmEst} we know that
\begin{align}
 d_j(q_1,q_2) \le C'(K,m,p,I_0,D_0,C) d_0(q_1,q_2)^{\frac{p-m}{p}},
\end{align}
and hence by the Arzella-Ascolli theorem we know there exists a continuos function $d_{\infty}:M \times M \rightarrow [0,\infty)$ so that $d_j \rightarrow d_{\infty}$ uniformly as functions. Now if we define $M_{\infty}=(M/d_{\infty},d_{\infty})$ then by the consequences of Corollary 7.3.28 of \cite{BBI} we know that $M_j \GHto M_{\infty}$.

In the case where we have assumed a H\"{o}lder lower bound we know that
\begin{align}
cd_0(q_1,q_2)^{\eta} \le d_j(q_1,q_2) \le C'(K,m,p,I_0,D_0,C) d_0(q_1,q_2)^{\frac{p-1}{p}},
\end{align}
and hence $M/d_{\infty}=M$.
\end{proof}

\begin{proof}[Proof of Theorem \ref{MainThmConv}]
By Theorem \ref{MainThmComp} we know that there is a limiting metric space $d_{\infty}$ so that $M_j \GHto M_{\infty}$ where $M_{\infty}=(M,d_{\infty})$. By Theorem \ref{VADB} we know that $M_j \VFto M_0$. By  Theorem 3.20 of \cite{SW-JDG}, stated exactly as required in Theorem 2.30 of \cite{Sormani-ArzAsc},  we know that $M_0 \subset M_{\infty}$ and hence $M_0=M_{\infty}$, as desired. The measured Gromov-Hausdorff convergence follows from the volume preserving intrinsic flat convergence by Theorem 2.4 of \cite{Allen-Sormani-2}. 
\end{proof}

\begin{proof}[Proof of Theorem \ref{TorusTheorem}]
Since each $g_{0,j}$ has either constant negative or zero scalar curvature we can pick a subsequence which is either entirely zero scalar curvature or entirely negative scalar curvature which meets the hypotheses of Theorem 1.2 or Theorem 1.4 of \cite{Allen-Tori}. Also, notice that Theorem \ref{MainThmConv} implies a diameter bound. Then by Lemma 4.4 of \cite{Allen-Tori} we see that the $L^{\frac{p}{2}}, p>m$ bound implies the necessary uniform integrability condition to apply Theorem 1.2 or Theorem 1.4 of \cite{Allen-Tori} to conclude volume preserving Sormani-Wenger intrinsic flat convergence to a flat torus $\bar{\Tor}_0^m$. Then by Theorem \ref{MainThmComp} we know that there is a limiting metric space $d_{\infty}$ so that $M_j \GHto M_{\infty}$ where $M_{\infty}=(\mathbb{T}^m/d_{\infty},d_{\infty})$. By  Theorem 3.20 of \cite{SW-JDG}, stated exactly as required in Theorem 2.30 of \cite{Sormani-ArzAsc},  we know that $\bar{\Tor}_0^m \subset \mathbb{T}^m/d_{\infty}$ and hence $\bar{\Tor}_0^m=\mathbb{T}^m/d_{\infty}$, as desired. The measured Gromov-Hausdorff convergence follows from the volume preserving intrinsic flat convergence by Theorem 2.4 of \cite{Allen-Sormani-2}. 
\end{proof}

  \bibliographystyle{alpha}
 \bibliography{Allen}

\end{document}